\let\emph\undefined\newcommand{\emph}[1]{\textsl{#1}}
\newcommand{\spaceplease}{\needspace{5\baselineskip}}
\tikzstyle{tikzfig}=[baseline=-0.25em,scale=0.5]
\tikzstyle{none}=[inner sep=0mm]
\newcommand{\tikzfig}[1]{%
	{\tikzstyle{every picture}=[tikzfig]
		\IfFileExists{#1.tikz}
		{\input{#1.tikz}}
		{%
			\IfFileExists{./figures/#1.tikz}
			{\input{./figures/#1.tikz}}
			{\tikz[baseline=-0.5em]{\node[draw=red,font=\color{red},fill=red!10!white] {\textit{#1}};}}%
	}}%
}
\tikzstyle{every loop}=[]
\tikzstyle{black dot}=[fill=black, draw=black, shape=circle, minimum size=3pt, inner sep=0pt]
\tikzstyle{black dot small}=[fill=black, draw=black, shape=circle, minimum size=3pt, inner sep=0pt]
\tikzstyle{wbox}=[fill=white, draw=black, shape=rectangle, minimum height=0.5cm, minimum width=0.01cm]
\tikzstyle{bbox}=[fill=white, draw=blue, shape=rectangle, minimum height=0.5cm, minimum width=0.01cm]
\tikzstyle{rbox}=[fill=white, draw=red, shape=rectangle, minimum height=0.5cm, minimum width=0.01cm]
\tikzstyle{bwbox}=[draw=blue, shape=rectangle, minimum width=2cm, minimum height=0.5cm]
\tikzstyle{bbwbox}=[draw=blue, shape=rectangle, minimum width=1cm, minimum height=1cm]
\tikzstyle{big white circle}=[fill=white, draw=black, shape=circle, minimum width=0.75cm]
\tikzstyle{white dot big}=[fill=white, draw=black, shape=circle, inner sep=1pt]
\tikzstyle{white dot}=[fill=white, draw=black, shape=circle, minimum size=3pt, inner sep=0pt]
\tikzstyle{flat box}=[fill=white, draw=black, shape=rectangle, minimum width=1.3cm, minimum height=0.5cm,fill=morphismcolor]
\tikzstyle{square}=[fill=white, draw=black, shape=rectangle]
\tikzstyle{flat box 2}=[fill=white, draw=black, shape=rectangle, minimum height=0.5cm, minimum width=0.01cm,fill=morphismcolor]
\tikzstyle{bigbox}=[fill=white, draw=black, shape=rectangle, minimum height=0.5cm, minimum width=0.8cm,fill=morphismcolor]
\tikzstyle{over }=[front]
\tikzstyle{theta}=[fill=blue, draw=blue, shape=ellipse, minimum height=6pt, minimum width=6pt, inner sep=0pt]
\tikzstyle{thetabig}=[fill=blue, draw=blue, shape=ellipse, minimum width=1cm, minimum height=0.01cm]
\tikzstyle{thetainv}=[fill=blue, draw=red, shape=ellipse, minimum height=6pt, minimum width=6pt, inner sep=0pt]
\tikzstyle{thetabinv}=[fill=blue, draw=red, shape=ellipse, minimum width=1cm, minimum height=0.01cm]
\tikzstyle{bigdisk}=[draw=black, shape=circle, minimum width=3cm]
\tikzstyle{wdisk}=[shape=circle, minimum width=0.48cm,fill=white]
\tikzstyle{bigdisk2}=[draw=black, fill=lightgray, shape=circle, minimum width=3cm]
\tikzstyle{little disk}=[fill=white, draw=black, shape=circle, minimum width=0.5cm]
\tikzstyle{mid arrow}=[-, postaction={on each segment={mid arrow}}]
\tikzstyle{end arrow}=[->]
\tikzstyle{mover}=[-, link]
\tikzstyle{mydots}=[-,dotted]
\tikzstyle{open}=[-, line width=1pt,draw=blue,postaction={on each segment={mid arrow}}]
\tikzstyle{thick}=[-,line width=1pt]
\tikzstyle{dotarrow}=[->,dotted,draw=red,line width=1pt]
\tikzstyle{red mid arrow}=[-, draw={rgb,255: red,214; green,42; blue,51}, postaction={on each segment={mid arrow}}, line width=1pt]
\tikzstyle{RED}=[-, draw={rgb,255: red,214; green,42; blue,51}]
\tikzstyle{REDdotted}=[-,dotted, draw={rgb,255: red,214; green,42; blue,51}]
\tikzstyle{blue}=[-, draw=blue]
\tikzstyle{blue mid arrow}=[-, draw={rgb,255: red,23; green,37; blue,167}, postaction={on each segment={mid arrow}}, line width=1pt]
\tikzstyle{over}=[-, link]
\tikzstyle{mover}=[-, link]
\tikzstyle{mapsto}=[{|->}]
\tikzset{
	on each segment/.style={
		decorate,
		decoration={
			show path construction,
			moveto code={},
			lineto code={
				\path [#1]
				(\tikzinputsegmentfirst) -- (\tikzinputsegmentlast);
			},
			curveto code={
				\path [#1] (\tikzinputsegmentfirst)
				.. controls
				(\tikzinputsegmentsupporta) and (\tikzinputsegmentsupportb)
				..
				(\tikzinputsegmentlast);
			},
			closepath code={
				\path [#1]
				(\tikzinputsegmentfirst) -- (\tikzinputsegmentlast);
			},
		},
	},
	mid arrow/.style={postaction={decorate,decoration={
				markings,
				mark=at position .7 with {\arrow[#1]{stealth}}
	}}},
}
\tikzset{%
	link/.style    = { white, double = black, line width = 1.8pt,
		double distance = 0.4pt },
	channel/.style = { white, double = black, line width = 0.8pt,
		double distance = 0.8pt },
}
\newtheoremstyle{mytheorem}
  {\topsep}
  {\topsep}
  {\slshape}
  {0pt}
  {\scshape}
  {.}
  { }
  {\thmname{#1}\thmnumber{ #2}\thmnote{ {\normalfont\slshape(#3)}}}
  \newtheoremstyle{mydefinition}
    {\topsep}
    {\topsep}
    {\normalfont}
    {0pt}
    {\scshape}
    {.}
    { }
    {\thmname{#1}\thmnumber{ #2}\thmnote{ {\normalfont\slshape(#3)}}}
\theoremstyle{mytheorem}
\newtheorem{theorem}{Theorem}[section]
\newtheorem*{rep@theorem}{\rep@title}
\newcommand{\newreptheorem}[2]{%
	\newenvironment{rep#1}[1]{%
		\def\rep@title{#2 \ref{##1}}%
		\begin{rep@theorem}}%
		{\end{rep@theorem}}}
\newtheorem{lemma}[theorem]{Lemma}
\newtheorem{proposition}[theorem]{Proposition}
\newtheorem{corollary}[theorem]{Corollary}
\theoremstyle{mydefinition}
\newtheorem{definition}[theorem]{Definition}
\newenvironment{remark}
{\pushQED{\qed}\remm}
{\popQED\endremm}
\numberwithin{equation}{section}
\newenvironment{pnum}{\begin{enumerate}[topsep=2pt,parsep=2pt,partopsep=2pt,itemsep=0pt,label={(\roman{*})}]}{\end{enumerate}}
\g@addto@macro{\appendix}{%
	\patchcmd{\@@makechapterhead}
	{\endgraf\nobreak\vskip.5\baselineskip}
	{\hspace*{-.5em}:\space}
	{}{}
	\patchcmd{\@chapter}
	{\addchaptertocentry{\thechapter}}
	{\addchaptertocentry{Appendix~\thechapter:}}
	{}{}
	\addtocontents{toc}{%
		\protect\patchcmd{\protect\l@chapter}
		{1.5em}
		{6.5em}
		{}{}}
}
\DeclareMathSymbol{\Phiit}{\mathalpha}{letters}{"08}\let\Phi\undefined\newcommand{\Phi}{\Phiit}
\DeclareMathSymbol{\Psiit}{\mathalpha}{letters}{"09}\let\Psi\undefined\newcommand{\Psi}{\Psiit}
\DeclareMathSymbol{\Sigmait}{\mathalpha}{letters}{"06}\let\Sigma\undefined\newcommand{\Sigma}{\Sigmait}
\DeclareMathSymbol{\Xiit}{\mathalpha}{letters}{"04}
\DeclareMathSymbol{\Lambdait}{\mathalpha}{letters}{"03}\let\Lambda\undefined\newcommand{\Lambda}{\Lambdait}
\DeclareMathSymbol{\Piit}{\mathalpha}{letters}{"05}\let\Pi\undefined\newcommand{\Pi}{\Piit}
\DeclareMathSymbol{\Gammait}{\mathalpha}{letters}{"00}\let\Gamma\undefined\newcommand{\Gamma}{\Gammait}
\DeclareMathSymbol{\Omegait}{\mathalpha}{letters}{"0A}\let\Omega\undefined\newcommand{\Omega}{\Omegait}
\DeclareMathSymbol{\Upsilonit}{\mathalpha}{letters}{"07}\let\Upsilon\undefined\newcommand{\Upsilon}{\Upilonit}
\DeclareMathSymbol{\Thetait}{\mathalpha}{letters}{"02}\let\Theta\undefined\newcommand{\Theta}{\Thetait}
\def\Hom{\mathrm{Hom}}
\def\id{\mathrm{id}}
\def\dim{\mathrm{dim}}
\let\to\undefined\newcommand{\to}{\longrightarrow}
\let\mapsto\undefined\newcommand{\mapsto}{\longmapsto}
\newcommand{\catf}[1]{\mathsf{#1}}
\newcommand{\Proj}{\operatorname{\catf{Proj}}}
\newcommand{\Map}{\catf{Map}}
\newcommand{\IN}{{\normalfont\tiny in}}
\newcommand{\OUT}{{\normalfont\tiny out}}
\newcommand{\naka}{\catf{N}^\catf{r}}
\newcommand{\ra}[1]{\ \xrightarrow{\ \  #1  \  \ }\ }
\def\Ch{\catf{Ch}_k}
\newcommand{\trr}{\catf{tr}_\catf{r}}
\newcommand{\HS}{\catf{HS}}
\newcommand{\trace}{\catf{t}}
\newcommand{\spr}[1]{\left\langle #1\right\rangle}
\newcommand{\cat}[1]{\mathcal{#1}}
\newcommand{\OC}{\catf{OC}}
\newcommand{\lint}{\int_\mathbb{L}}
\newcommand{\ostar}{\mathbin{\mathpalette\make@circled\star}}
\newcommand{\make@circled}[2]{%
	\ooalign{$\m@th#1\smallbigcirc{#1}$\cr\hidewidth$\m@th#1#2$\hidewidth\cr}%
}
\newcommand{\smallbigcirc}[1]{%
	\vcenter{\hbox{\scalebox{0.77778}{$\m@th#1\bigcirc$}}}%
}
\definecolor{Blue}  {rgb} {0.282352,0.239215,0.803921}
\definecolor{Green} {rgb} {0.133333,0.545098,0.133333}
\definecolor{Red}   {rgb} {0.803921,0.000000,0.000000}
\definecolor{Violet}{rgb} {0.580392,0.000000,0.827450}
\definecolor{morphismcolor}{rgb}{1,1,1}
\newtheorem*{theorem*}{Theorem}
\newtheorem*{corollary*}{Corollary}
\begin{document} 
\vspace*{-10mm}
	\begin{flushright}
		\small
		{\sffamily [ZMP-HH/21-3]} \\
		\textsf{Hamburger Beiträge zur Mathematik Nr.~890}\\
		\textsf{CPH-GEOTOP-DNRF151}\\
		\textsf{March 2021}
	\end{flushright}
	
\vspace{8mm}
	
	\begin{center}
		\textbf{\Large{The Trace Field Theory of a Finite Tensor Category}}\\
		\vspace{6mm}

{\large Christoph Schweigert ${}^a$ and  Lukas Woike ${}^b$}

\vspace{3mm}

\normalsize
{\slshape $^a$ Fachbereich Mathematik\\ Universit\"at Hamburg\\
	Bereich Algebra und Zahlentheorie\\
	Bundesstra\ss e 55\\  D-20146 Hamburg }

\vspace*{3mm}	

{\slshape $^b$ Institut for Matematiske Fag\\ K\o benhavns Universitet\\
	Universitetsparken 5 \\  DK-2100 K\o benhavn \O }

	\end{center}
\vspace*{1mm}
	\begin{abstract}\noindent 	
Given a finite tensor category $\cat{C}$, we prove that a modified trace on the tensor ideal of projective objects can be obtained from a suitable trivialization of the Nakayama functor as right $\cat{C}$-module functor. Using a result of Costello, this allows us to associate to any finite tensor category equipped with such a  trivialization of the Nakayama functor a chain complex valued topological conformal field theory, the \emph{trace field theory}. The trace field theory topologically describes the modified trace, the Hattori-Stallings trace, and also the structures induced by them on the Hochschild complex of $\cat{C}$. In this article, we focus on implications in the \emph{linear} (as opposed to differential graded) setting: We use the trace field theory to define a non-unital homotopy commutative product on the Hochschild chains in degree zero. This product is block diagonal and can be described through the handle elements of the trace field theory. Taking the modified trace of the handle elements recovers the Cartan matrix of $\cat{C}$.
\end{abstract}



\normalsize

\section{Introduction and summary}
A \emph{finite category} 
over a fixed field $k$, that we will assume to be algebraically closed throughout,
is an Abelian category enriched over finite-dimensional $k$-vector spaces which has enough projective objects and finitely many isomorphism classes of simple objects; moreover, one requires that every object has finite length. 
Any finite category $\cat{C}$ comes with a right exact endofunctor
\begin{align}
	\naka : \cat{C}\to\cat{C}\ , \quad X \mapsto \naka X:=\int^{Y\in \cat{C}} \cat{C}(X,Y)^* \otimes Y \ , 
\end{align}
the \emph{(right) Nakayama functor}, where $\otimes$ denotes  the tensoring of objects in $\cat{C}$ with finite-dimensional vector spaces.
This Morita invariant description was given in \cite{fss}, and it reduces to the usual definition of the (right) Nakayama functor 
for the category of finite-dimensional modules over a finite-dimensional $k$-algebra.
As a consequence of the coend description of $\naka$,
we obtain in Corollary~\ref{cortheiso}
 natural isomorphisms
	\begin{align} \cat{C}(P,X) \cong \cat{C}(X,\naka P)^* \quad \text{for}
		 \quad X\in\cat{C}\ , \quad  P\in \Proj \cat{C} \ 
\end{align}
turning the subcategory $\Proj\cat{C}\subset \cat{C}$
into an $\naka$-twisted Calabi-Yau category.
 Through the  correspondence between 
 (twisted) Calabi-Yau structures and (twisted) traces, one  obtains the trace
 \begin{align}\trace_P : \cat{C}(P,\naka P) \to k \quad \text{for}\quad P\in \Proj \cat{C} \ . \label{eqngentrace}  \end{align}
 It is now an obvious task to relate this relatively generically constructed
 trace (or rather family of traces) to the 	
modified trace \cite{geerpmturaev,mtrace1,mtrace2,mtrace3,mtrace,bbg18}. Modified traces are
 not only a concept of independent algebraic interest, but   can also 
 be used for the construction of invariants of closed three-dimensional manifolds, see e.g.\ \cite{cgp,bcgpm}.
In such constructions, they serve as a non-semisimple replacement for quantum traces.

In this article, 
we prove in Theorem~\ref{thmmtrace} that, for a finite tensor category in the sense of Etingof-Ostrik \cite{etingofostrik}, i.e.\ a finite category with rigid monoidal product and simple unit,
the trace~\eqref{eqngentrace} on the tensor ideal of projective objects, indeed produces a twisted modified trace.

\begin{reptheorem}{thmmtrace}
	For any finite tensor category $\cat{C}$,
the twisted trace
$	(\trace_P : \cat{C}(P,\naka P) \to k)_{P\in \Proj\cat{C}}$ 
from \eqref{eqngentrace}
is (twisted) cyclic, non-degenerate and satisfies a generalized partial trace property.
Under the additional assumption that on the finite tensor category $\cat{C}$ a pivotal structure has been chosen,
the twisted trace
$	(\trace_P : \cat{C}(P,\naka P) \to k)_{P\in \Proj\cat{C}}$ 
can be naturally identified with a right modified $D$-trace,
where $D\in\cat{C}$ is the distinguished invertible object of $\cat{C}$.
\end{reptheorem}

This uses crucially that by \cite[Theorem~4.26]{fss} the Nakayama functor of a finite tensor category can be expressed as
\begin{align}
	\naka \cong D^{-1}\otimes -^{\vee \vee}
	\end{align}
using the distinguished invertible object $D\in \cat{C}$ \cite{eno-d} and the double dual functor $-^{\vee \vee}$. 
Note that we do not require a pivotal structure to define our traces because the double dual functor can be conveniently
absorbed into the Nakayama functor. If $\cat{C}$ is pivotal, however, we recover the usual definitions.

Our motivation for unraveling the connection between the Nakayama functor and the modified trace
is topological, but does not directly come from invariants of closed three-dimensional manifolds. Instead, we are motivated by two-dimensional topological conformal field theory, a certain type of differential graded two-dimensional open-closed topological field theory:
Suppose that we are given a finite tensor category $\cat{C}$ and 
a \emph{symmetric Frobenius structure}, by which we mean
a certain trivialization of the right Nakayama functor as right $\cat{C}$-module functor relative to a pivotal structure (we give the details in Definition~\ref{defsymfrob}; it will amount to a pivotal structure and a trivialization of the distinguished invertible object). 
Then the trace coming from \emph{this}
particular trivialization of $\naka$ 	
produces, as discussed above,
a Calabi-Yau structure on the tensor ideal $\Proj \cat{C}\subset \cat{C}$.
To this Calabi-Yau structure on $\Proj \cat{C}$,
 Costello's Theorem \cite{costellotcft} associates a 
topological conformal field theory $\Phi_\cat{C}$ 
that we refer to as the \emph{trace field theory} 
of the finite tensor category $\cat{C}$ 
with symmetric Frobenius structure.
On a technical level, $\Phi_\cat{C}:\OC\to\Ch$ is a symmetric monoidal functor from a certain differential graded version of the open-closed bordism category to chain complexes, we recall the details in Section~\ref{sectracefieldtheory}.  

If we evaluate $\Phi_\cat{C}$
 on the \emph{open} part of the two-dimensional bordism category, $\Phi_\cat{C}$ provides topological tools to compute with traces, but only captures information that one could  have  obtained by hand. This is drastically different for the \emph{closed} part of the two-dimensional bordism category: On a closed boundary component, i.e.\ on the circle, we obtain, following again \cite{costellotcft}, the Hochschild complex of $\cat{C}$, i.e.\ the homotopy coend $\lint^{X\in\Proj\cat{C}} \cat{C}(X,X)$ over the endomorphism spaces of projective objects. On this complex, we have an action of the prop provided by the chains on moduli spaces of Riemann surfaces with closed boundary components. Phrased differently,
the trace field theory $\Phi_\cat{C}$
captures the higher structures induced by the modified trace on the Hochschild complex of $\cat{C}$ while, at the same time, being very accessible
through the tools available for computations with Nakayama functors.
These higher structures will be developed in detail elsewhere. For the present article, no homotopy theory is needed, and we focus entirely on the purely linear consequences, i.e.\ on structures induced in homological degree zero. 

\begin{reptheorem}{thmtracefieldtheory}
Let $\cat{C}$ be a finite tensor category with symmetric Frobenius structure and $\Phi_\cat{C}:\OC\to \Ch$ its trace field theory.
	The evaluation of $\Phi_\cat{C}$ on the disk 
	with one incoming open boundary interval whose complementing free boundary carries the label $P\in\Proj\cat{C}$
	\begin{equation} \Phi_\cat{C} \left( \tikzfig{diskohne}  \right)\  : \  \cat{C}(P,P)\to k 
	\end{equation}
	is a right modified trace, while the evaluation of $\Phi_\cat{C}$ on the cylinder with one incoming open boundary interval with complementing free boundary label $P\in\Proj\cat{C}$ and one outgoing closed boundary circle
	\begin{align}
		\Phi_\cat{C} \left( \tikzfig{ht}  \right)\  : \  \cat{C}(P,P)\to \lint^{P\in\Proj\cat{C}} \cat{C}(P,P) 
		\end{align}
	agrees, after taking zeroth homology, with the Hattori-Stallings trace of $\cat{C}$.
	\end{reptheorem}

By evaluation of $\Phi_\cat{C}$
on the pair of pants, we obtain a non-unital multiplication $\star$
on the Hochschild complex
(it will generally not have a unit because the bordism that would normally give us a unit is not admitted in Costello's category $\OC$).
From results of Wahl and Westerland \cite{wahlwesterland}, we can conclude that this multiplication is supported, up to homotopy, in degree zero. Moreover, it is homotopy commutative by construction.
Besides the connection between the Nakayama functor and the modified trace,
the construction of this multiplication or rather its degree zero remnant is one of the main results of this short article and will be one of the key ingredients for future work.
In the present article, we prove that the product $\star$ is block diagonal (Proposition~\ref{propdiag}) and provide a formula for $\star$ (when evaluated on identity morphisms)
involving the \emph{handle elements}
\begin{align}
		\xi_{P,Q} :=    \Phi_\cat{C}\left(    \tikzfig{handleelement}    \right)\in\cat{C}(P,P) 
		\quad\text{for}\quad P,Q\in\Proj\cat{C} \ . \label{eqnhandleelementintro}
	\end{align}
of the trace field theory.

\begin{reptheorem}{thmhandleelement}
	Let $\cat{C}$ be a finite tensor category with symmetric Frobenius structure.
	\begin{pnum}

		\item Let $P,Q\in\Proj\cat{C}$.
		Up to boundary in the Hochschild complex of $\cat{C}$,
		the $\star$-product of $\id_P$ and $\id_Q$ is the handle element $\xi_{P,Q}$ of $P$ and $Q$:
		\begin{align} \id_P \star \id_Q \simeq \xi_{P,Q} \ . 
		\end{align}
		
		\item
	All handle elements in the sense of~\eqref{eqnhandleelementintro} are central elements in the endomorphism algebras of $\cat{C}$.

\item 
	The modified trace of the handle element is given by
	\begin{align} \trace_P \xi_{P,Q}=\dim \,\cat{C}(P,Q) \ . \label{eqntraceformulai}
	\end{align}
\end{pnum}
\end{reptheorem}

Formula~\eqref{eqntraceformulai} tells us that the modified trace of the handle elements recovers the entries of the Cartan matrix of $\cat{C}$.
If $P$ is simple, the handle element can be identified with the number
\begin{align}
	\xi_{P,Q} = \frac{\dim\, \cat{C}(P,Q)}{d^\text{m} (P)} \in k \ , 
\end{align}
where $d^\text{m} (P):=\trace_P(\id_P)\in k^\times$ is the modified dimension of $P$.

If we denote for an endomorphism $f:P\to P$ of a projective object $P$
the Hattori-Stallings trace by $\HS(f)\in HH_0(\cat{C})$, we obtain the following statement in homology:

	\begin{repcorollary}{corhs}
	For any finite tensor category $\cat{C}$ with symmetric Frobenius structure,
	\begin{align}
		\trace (\HS (\id_P) \star \HS (\id_Q) ) = \dim\, \cat{C}(P,Q) \quad \text{for}\quad P,Q \in \Proj \cat{C} \ . 
	\end{align}
\end{repcorollary}
	Here we denote the map on $HH_0(\cat{C})$
induced by the modified trace 
again by $\trace$.

 \subparagraph{Conventions.} As already mentioned above, for the entire article, we fix an algebraically closed field $k$ (which is not assumed to have characteristic zero).

Concerning the convention on left and right duality, we follow \cite{egno}:
In a \emph{rigid} monoidal category $\cat{C}$, every object $X \in \cat{C}$ has \begin{itemize}
	\item a \emph{left dual} $X^\vee$
that comes with an evaluation $d_X:X^\vee \otimes X\to I$ and a coevaluation $b_X:I\to X\otimes X^\vee$ subject to the usual zigzag identities,

\item and a \emph{right dual ${^\vee \! X}$} that comes with an evaluation $\widetilde d_X : X\otimes  {^\vee \! X} \to I$ and a coevaluation $\widetilde b_X : I \to {^\vee \! X}\otimes X$ again subject to the usual zigzag identities.
\end{itemize}

 \subparagraph{Acknowledgments.} 
We would like to thank 
Jürgen Fuchs,
Lukas Müller
and Nathalie Wahl
for helpful discussions. 

CS is supported by the Deutsche Forschungsgemeinschaft (DFG, German Research
Foundation) 
under Germany’s Excellence Strategy -- EXC 2121 ``Quantum Universe'' -- 390833306.
LW gratefully acknowledges support by 
the Danish National Research Foundation through the Copenhagen Centre for Geometry
and Topology (DNRF151).

 \subparagraph{Note.} 
While finalizing this manuscript,
the preprint \cite{ss21}
appeared which provides a proof for a connection between the Nakayama functor and modified traces very similar to the one afforded by Theorem~\ref{thmmtrace}.

\section{Traces on finite categories\label{sectracesfin}}
For any finite category $\cat{C}$, the (right) Nakayama functor $\naka : \cat{C}\to\cat{C}$ is given by 
\begin{align} \naka X:= \int^{Y\in \cat{C}} \cat{C}(X,Y)^* \otimes Y  \ .   \label{defeqnnaka}
\end{align}
This is the Morita invariant description
given in \cite{fss} for the usual Nakayama functor 
for finite-dimensional modules over a finite-dimensional algebra $A$
which is given by
\begin{align}\label{eqnnakayamaalg} \naka X= \Hom_A(X,A)^*\cong A^* \otimes_A X  \end{align}
for any finite-dimensional $A$-module $X$.
The Nakayama functor sends projective objects to injective objects.

\begin{proposition}\label{propdualhom}
	For any finite category $\cat{C}$, there is a canonical isomorphism of chain complexes
	\begin{align} \cat{C}(X,Y_\bullet)^* \cong \cat{C}(Y_\bullet, \naka X  ) \end{align} natural in objects $X,Y\in\cat{C}$, where $Y_\bullet$ is a projective resolution of $Y$. 
\end{proposition}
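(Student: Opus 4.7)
The plan is to reduce the statement to the single-object case $Y_\bullet = P \in \Proj \cat{C}$ and then extend to general resolutions degreewise.

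First I would establish, for every projective $P$ and every $X \in \cat{C}$, a natural isomorphism
\[ \cat{C}(P, \naka X) \cong \cat{C}(X, P)^* . \]
Starting from the coend formula~\eqref{defeqnnaka} for $\naka$, I would move $\cat{C}(P, -)$ inside the coend using that $P$ is projective (so $\cat{C}(P, -)$ is exact and commutes with the colimits computing the coend), together with the obvious identity $\cat{C}(P, V \otimes Y) \cong V \otimes \cat{C}(P, Y)$ for finite-dimensional $V$ coming from the $\Vect$-enrichment. This gives
\[ \cat{C}(P, \naka X) \cong \int^{Y \in \cat{C}} \cat{C}(X, Y)^* \otimes \cat{C}(P, Y) . \]
Applying the co-Yoneda (density) lemma to the presheaf $Y \mapsto \cat{C}(X, Y)^*$ at the object $P$ then identifies the right-hand side with $\cat{C}(X, P)^*$. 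The whole chain of identifications is natural in $X \in \cat{C}$ and in $P \in \Proj \cat{C}$.

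Next I would extend degreewise. For a projective resolution $Y_\bullet \to Y$, applying the previous iso in each degree yields $\cat{C}(X, Y_n)^* \cong \cat{C}(Y_n, \naka X)$. Naturality in the projective variable $P = Y_n$ ensures that these degreewise isomorphisms intertwine the cochain differentials induced by the differential of $Y_\bullet$ (on the left via dualizing $\cat{C}(X, -)$ applied to $Y_\bullet$, on the right via the contravariant $\cat{C}(-, \naka X)$), so they assemble into an isomorphism of chain complexes. Naturality in $X$ is inherited from the single-object case, and naturality in $Y$ follows from the standard homological fact that any morphism $Y \to Y'$ lifts to a chain map between projective resolutions, unique up to chain homotopy.

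The main obstacle is a technical one: justifying the interchange of $\cat{C}(P, -)$ with the coend defining $\naka X$, since a finite $k$-linear category is not small in the obvious sense. The cleanest route is Morita reduction; upon choosing a projective generator $G$ and passing to finite-dimensional modules over $A = \End(G)^{\op}$, formula~\eqref{eqnnakayamaalg} expresses $\naka X$ as the concrete object $A^* \otimes_A X$, and the required interchange reduces to a trivial consequence of the tensor–hom adjunction. The Morita invariance of the coend description of $\naka$ (established in \cite{fss}) then transports the result back to $\cat{C}$.
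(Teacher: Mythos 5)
Your argument is correct and is essentially the paper's own proof: both move the (exact, since applied to projectives) hom functor inside the coend defining $\naka X$, apply the co-Yoneda lemma to identify the result with $\cat{C}(X,-)^*$, and justify the interchange by the Morita reduction to finite-dimensional modules over a finite-dimensional algebra, which exhibits the coend as a finite colimit. The only cosmetic difference is that you first treat a single projective $P$ and then assemble degreewise, whereas the paper runs the computation directly with $Y_\bullet$ and computes all coends degree-wise.
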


\begin{proof}
	Since every finite category can be written as finite-dimensional modules over a finite-dimensional algebra, we conclude from the comparison of~\eqref{defeqnnaka} and~\eqref{eqnnakayamaalg} that the right hand side of~\eqref{defeqnnaka} can be modeled as a \emph{finite} colimit.
	
	Since $\cat{C}(Y_\bullet,-)$ is exact, the finite colimit used to define $\naka X$ is preserved, which leads to
	\begin{align}
	\cat{C}(Y_\bullet, \naka X  )\cong \int^{Z\in\cat{C}} \cat{C}(Y_\bullet,\cat{C}(X,Z)^*\otimes Z)\cong \int^{Z\in\cat{C}} \cat{C}(Y_\bullet, Z)\otimes \cat{C}(X,Z)^*\cong \cat{C}(X,Y_\bullet)^* \ .
	\end{align}
All coends are computed degree-wise here.
	In the last step, we have used the Yoneda Lemma. 
\end{proof}

The isomorphisms from Proposition~\ref{propdualhom} can be used to obtain a twisted Calabi-Yau structure on $\Proj \cat{C}$. 

\begin{definition}
	An \emph{$(F,G)$-twisted Calabi-Yau category} is a linear category $\cat{A}$ with endofunctors $F,G:\cat{A}\to\cat{A}$ and  isomorphisms $\cat{A}(F(X),Y)\cong \cat{A}(Y,G(X))^*$ natural in $X,Y\in \cat{A}$. 
\end{definition}

In order to avoid overloaded notation, 
we call a twisted Calabi-Yau category \emph{left $F$-twisted} and \emph{right $G$-twisted} if the twist datum $(F,G)$ is given $(F,\id_\cat{A})$ and $(\id_\cat{A},G)$, respectively.
By a \emph{Calabi-Yau category} (without any mention of twists) we will understand an untwisted Calabi-Yau category in the sense that $F=G=\id_\cat{A}$.
A Calabi-Yau category with one object is a symmetric Frobenius algebra.

\begin{corollary}\label{cortheiso}
	For a finite  category $\cat{C}$,
	there are canonical isomorphisms
	\begin{align} \cat{C}(P,X) \cong \cat{C}(X,\naka P)^*  \label{eqntheiso}
	\end{align}
	natural in $X\in\cat{C}$ and $P\in \Proj \cat{C}$.
	In particular, $\Proj \cat{C}$ is a right $\naka$-twisted Calabi-Yau category.
\end{corollary}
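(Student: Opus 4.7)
The plan is to derive this as a direct corollary of Proposition~\ref{propdualhom}, by specializing the first argument there to the projective object $P$. Given $X\in\cat{C}$ and $P\in\Proj\cat{C}$, I would fix a projective resolution $X_\bullet \to X$ (available because $\cat{C}$ has enough projectives) and then apply Proposition~\ref{propdualhom} with $P$ in the role of the ``$X$'' variable and $X$ in the role of the ``$Y$'' variable of the proposition. This produces a natural isomorphism of chain complexes
\begin{equation}
\cat{C}(P,X_\bullet)^* \;\cong\; \cat{C}(X_\bullet,\naka P).
\end{equation}

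The next step is to take degree-zero (co)homology on each side. Since $P\in\Proj\cat{C}$, the functor $\cat{C}(P,-)$ is exact, so $\cat{C}(P,X_\bullet)$ is a resolution of $\cat{C}(P,X)$; dualizing and using the finite-dimensionality of each $\cat{C}(P,X_i)$, the cochain complex $\cat{C}(P,X_\bullet)^*$ has $H^0 \cong \cat{C}(P,X)^*$ and vanishes in positive degrees. The right-hand side is the standard complex computing the groups $\Ext^\bullet(X,\naka P)$, whose degree-zero part is $\cat{C}(X,\naka P)$. Comparing $H^0$ on the two sides produces a natural isomorphism $\cat{C}(P,X)^* \cong \cat{C}(X,\naka P)$, and a single application of duality (again using finite-dimensionality) delivers exactly~\eqref{eqntheiso}. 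Naturality in $X\in\cat{C}$ and $P\in\Proj\cat{C}$ is inherited from the chain-level naturality in Proposition~\ref{propdualhom}, together with the uniqueness up to chain homotopy of comparison maps between projective resolutions, which renders the induced map on $H^0$ independent of all choices.

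For the second assertion I would simply specialize~\eqref{eqntheiso} to $X=Q\in\Proj\cat{C}$; this produces a natural isomorphism $\cat{C}(P,Q) \cong \cat{C}(Q,\naka P)^*$, which is precisely the defining datum of a right $\naka$-twisted Calabi-Yau structure on $\Proj\cat{C}$ (taking $F=\id$ and $G=\naka$). The one mild subtlety I would flag is confirming that the degree-zero comparison isomorphism above is canonical, i.e.\ independent of the chosen resolution $X_\bullet$; this is routine homological algebra but should be made explicit in the writeup to justify the naturality claim. As a free by-product, the vanishing of the higher terms in $\Ext^\bullet(X,\naka P)$ recovers the well-known fact that $\naka$ sends projective objects to injective ones.
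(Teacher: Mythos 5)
Your argument is correct and follows essentially the same route as the paper: both apply Proposition~\ref{propdualhom} to a projective resolution $X_\bullet$ of $X$ with $P$ in the first slot and then collapse both sides using exactness of $\cat{C}(P,-)$. The only (cosmetic) difference is that the paper identifies the entire complex $\cat{C}(X_\bullet,\naka P)$ with $\cat{C}(X,\naka P)$ by invoking the injectivity of $\naka P$, whereas you extract only $H^0$ via left-exactness of $\cat{C}(-,\naka P)$ and recover that injectivity as a by-product.
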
 

\begin{proof}
	For $X\in \cat{C}$ and $P\in \Proj \cat{C}$,
	we find  (we denote equivalences of chain complexes aka quasi-isomorphisms by $\simeq$ and isomorphisms, as before, by $\cong$)
	\begin{align}\begin{array}{rclll}
	\cat{C}(X,\naka P)^* &\simeq & \cat{C}(X_\bullet,\naka P)^* && \text{(because $\naka P$ is  injective)} \\ &\cong& \cat{C}(P,X_\bullet)&& \text{(Proposition~\ref{propdualhom})}\\& \simeq &\cat{C}(P,X) && \text{(because $P$ is projective)} \ . \end{array}
	\end{align}
\end{proof}

\begin{definition}[Trace of a finite category]\label{deftracefc}
	For any finite category $\cat{C}$, we 
	define the pairings
	\begin{align}
	\label{eqntheparings} \spr{  -,-  } : \cat{C}(P,X)\otimes\cat{C}(X,\naka P) \ra{  \eqref{eqntheiso}   } \cat{C}(X,\naka P)^*\otimes \cat{C}(X,\naka P) &\ra{\text{evaluation}} k \\  \quad \text{for}\quad &X\in\cat{C}\ , \ P\in\Proj\cat{C} 
	\end{align}
	and, by considering the case $X=P$ in~\eqref{eqntheparings},
	the  \emph{twisted trace} \begin{align} \trace_P : \cat{C}(P,\naka P) \ra{  \spr{\id_P,-}  } k \quad \text{for}\quad P\in\Proj \cat{C} \label{eqnthetrace} \end{align} on $\cat{C}(P,\naka P)$.
	We refer to the family of maps~\eqref{eqnthetrace},
	where $P$ runs over all projective objects, as the \emph{twisted trace on $\cat{C}$}.
	By an \emph{untwisting} of the twisted trace, we mean a trivialization $\naka \cong \id_\cat{C}$ of $\naka$ (if there exists any) and the resulting identification of the maps \eqref{eqnthetrace} with maps $\cat{C}(P,P)\to k$ that we then refer to as \emph{untwisted trace}, or just \emph{trace} for brevity. 
\end{definition}

	It is important to note that the twisted trace is canonical while the untwisting (if possible) will involve choices.
	An untwisting of the trace is  equivalent to an untwisting of the twisted Calabi-Yau structure from Corollary~\ref{cortheiso}.

The usual correspondence between Calabi-Yau structures and traces can be adapted to the present situation and leads to the following:

\begin{lemma}\label{lemmatrace}
	For any finite  category $\cat{C}$, the twisted trace
	\begin{align}\trace_P : \cat{C}(P,\naka P) \to k \quad \text{for}\quad P\in \Proj \cat{C} \label{eqnthetraces} \end{align} 
	has the following properties:
	\begin{pnum}
		\item Cyclicity: For $P,Q\in\Proj \cat{C}$, $f: P\to \naka Q$ 
		and $g :Q\to P$, we have
		\begin{align}
		\trace_Q(fg)=\trace_P(\naka(g)f) \ . 
		\end{align}
		
		\item Non-degeneracy: The trace is non-degenerate in the sense that the pairings
		\begin{align}
		\cat{C}(P,X)\otimes \cat{C}(X,\naka P) \to k \ , \quad f\otimes g \mapsto \trace_P (gf)  \label{eqnsecondpairing}
		\end{align} 
		are non-degenerate. 
		In fact, they agree with the pairings~\eqref{eqntheparings}.
		
	\end{pnum} 
\end{lemma}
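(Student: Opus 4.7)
The plan is to derive both statements from the naturality of the isomorphism $\alpha_{P,X}: \cat{C}(P,X) \xrightarrow{\cong} \cat{C}(X, \naka P)^*$ underlying Corollary \ref{cortheiso}, which is natural in $X \in \cat{C}$ and in $P \in \Proj\cat{C}$. Writing $\spr{-,-}_P$ for the pairing $\cat{C}(P, X) \otimes \cat{C}(X, \naka P) \to k$ from~\eqref{eqntheparings}, we have by construction $\spr{u, v}_P = \alpha_{P, X}(u)(v)$ and $\trace_P(w) = \spr{\id_P, w}_P$.

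For part (ii), naturality of $\alpha_{P, -}$ in the $X$-slot translates into the identity
\begin{align}
\spr{h \circ u, v'}_P = \spr{u, v' \circ h}_P \quad \text{for } h: X \to X',\ u \in \cat{C}(P, X),\ v' \in \cat{C}(X', \naka P) \ .
\end{align}
Specializing to $X = P$, $X' = X$, $h = f$, $u = \id_P$, $v' = g$ immediately yields $\spr{f, g}_P = \trace_P(gf)$, so the pairing $f \otimes g \mapsto \trace_P(gf)$ coincides with $\spr{-, -}_P$. Non-degeneracy is then automatic: the pairing arises from the isomorphism $\alpha_{P, X}$ between the finite-dimensional vector spaces $\cat{C}(P, X)$ and $\cat{C}(X, \naka P)^*$, hence is perfect.

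For part (i), given $f: P \to \naka Q$ and $g: Q \to P$ with $P, Q \in \Proj\cat{C}$, I would evaluate the auxiliary expression $\spr{g, f}_Q$ (projective slot~$Q$, test object~$P$, so that $g \in \cat{C}(Q, P)$ and $f \in \cat{C}(P, \naka Q)$) in two different ways. Naturality in the $X$-slot, applied to $h = g$, $u = \id_Q$, $v' = f$, delivers $\spr{g, f}_Q = \spr{\id_Q, fg}_Q = \trace_Q(fg)$. Naturality of $\alpha_{-, X}$ in the $P$-slot along the morphism $g: Q \to P$ in $\Proj \cat{C}$ (recalling the covariance of $\naka$, so that $\naka g: \naka Q \to \naka P$) reads $\spr{u \circ g, v}_Q = \spr{u, \naka(g) \circ v}_P$; applied to $u = \id_P$ and $v = f$, this produces $\spr{g, f}_Q = \trace_P(\naka(g) f)$. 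Equating the two expressions yields the cyclicity relation.

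No conceptual obstacle remains; the only care required is bookkeeping the directions of the naturality squares, in particular the covariance of $\naka$ and the reversal of arrows under dualization, both of which are implicit in Corollary~\ref{cortheiso}. Finite-dimensionality of all Hom spaces in a finite category is what upgrades the natural isomorphism $\alpha_{P,X}$ to a perfect pairing for free.
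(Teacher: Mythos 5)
Your proof is correct and follows essentially the same route as the paper: both arguments spell out naturality of the isomorphism $\cat{C}(P,X)\cong\cat{C}(X,\naka P)^*$ in the $X$-slot and in the $P$-slot as the two pairing identities, evaluate $\spr{g,f}$ in two ways to get cyclicity, and deduce non-degeneracy by identifying the pairing \eqref{eqnsecondpairing} with \eqref{eqntheparings}. The bookkeeping of variances you flag is handled correctly.
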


\begin{proof}
	Let $X,Y \in \cat{C}$ and $P,Q\in\Proj\cat{C}$. Naturality of \eqref{eqntheiso} in $X$ means for $a:P\to X$, $b :Y\to \naka P$ and $c: X\to Y$
	\begin{align}
	\spr{  {a},bc}=\spr{  {ca},b} \ . \label{hateqn1}
	\end{align}
	Naturality of \eqref{eqntheiso} in $P$ means for $a:Q\to X$, $b:P\to Q$ and $c:X\to \naka P$
	\begin{align} 
	\spr{  {a},\naka(b)c}=\spr{  ab,c} \ . \label{hateqn2}
	\end{align}
	This implies for $f: P\to \naka Q$ 
	and $g :Q\to P$
	\begin{align}
	\trace_Q(fg)\stackrel{\eqref{eqnthetrace}}{=}\spr{  {\id_Q},fg} \stackrel{\eqref{hateqn1}}{=} \spr{  {g},f} \stackrel{\eqref{hateqn2}}{=} \spr{  {\id_P},\naka(g)f} \stackrel{\eqref{eqnthetrace}}{=} \trace_P(\naka(g)f) \ . 
	\end{align}
	This proves cyclicity.
	Non-degeneracy holds by construction because it follows easily from~\eqref{hateqn1} that the pairing~\eqref{eqnsecondpairing} agrees with $\spr{-,-}$.
\end{proof}

\spaceplease
\section{Traces on finite tensor categories and connection to modified traces\label{tracesonfinitfc}}
We will now turn to finite \emph{tensor} categories  and
connect the construction from
Definition~\ref{deftracefc} to  modified traces \cite{mtrace1,mtrace2,mtrace3,mtrace}.
A (twisted, right) modified trace on the tensor ideal of projective objects in a pivotal finite tensor category is a cyclic, non-degenerate trace that satisfies the right partial trace property as we will discuss in detail below. The first two properties hold very generally for traces constructed from linear trivializations of the Nakayama functor thanks to Lemma~\ref{lemmatrace}.

The partial trace property makes use of the monoidal structure. In order to understand when we can formulate and prove such a property for the trace from Definition~\ref{deftracefc}, one needs to understand the Nakayama functor of a finite tensor category:
Let $\cat{C}$ be any finite tensor category.
We denote by $\cat{C}_\cat{C}$ the finite category $\cat{C}$ as regular right module category over itself and by $\cat{C}^{\vee \vee}$ the finite category $\cat{C}$ as $\cat{C}$-right module with action given by $X.Y:=X\otimes Y^{\vee \vee}$ for $X,Y\in\cat{C}$. 

\begin{theorem}[{\cite[Theorem~3.26]{fss}}] \label{thmnakamodule}
	For any finite tensor category $\cat{C}$, the (right) Nakayama functor is an equivalence $\naka :  \cat{C}_\cat{C} \ra{\simeq} \cat{C}^{\vee \vee}$ of right $\cat{C}$-module categories; in particular, comes with canonical isomorphisms $ \naka(-\otimes X)\cong \naka(-)\otimes X^{\vee\vee}$ for $X\in\cat{C}$. Moreover, $\naka I\cong D^{-1}$, where $D\in\cat{C}$ is the distinguished invertible object and $D^{-1}$ its dual, and hence
	\begin{align} \naka \cong D^{-1}\otimes -^{\vee\vee}
		\end{align}
	by a canonical isomorphism.
\end{theorem}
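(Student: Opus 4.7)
The plan is to derive all three claims simultaneously by exploiting the coend formula for $\naka$ together with the rigidity of $\cat{C}$. The central idea is that the right $\cat{C}$-module structure is built into the coend itself via a change-of-variables argument, and once this structure is present the equivalence and the formula $\naka \cong D^{-1}\otimes -^{\vee\vee}$ essentially fall out by specializing at the unit and invoking the known properties of $D$.

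First I would construct the canonical isomorphism $\naka(X\otimes Y)\cong \naka(X)\otimes Y^{\vee\vee}$ by direct manipulation of the defining coend. Starting from
\begin{align}
\naka(X\otimes Y)=\int^{Z\in\cat{C}} \cat{C}(X\otimes Y,Z)^*\otimes Z,
\end{align}
the left-duality adjunction gives $\cat{C}(X\otimes Y, Z)\cong \cat{C}(X, Z\otimes Y^\vee)$. Since $-\otimes Y^{\vee\vee}$ is an equivalence of $\cat{C}$ with inverse $-\otimes Y^\vee$, I can reindex the coend by substituting $Z=W\otimes Y^{\vee\vee}$, using $W\otimes Y^{\vee\vee}\otimes Y^\vee\cong W$ via the duality pairing. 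This yields
\begin{align}
\naka(X\otimes Y)\cong \int^{W\in\cat{C}} \cat{C}(X,W)^*\otimes W\otimes Y^{\vee\vee}\cong \naka(X)\otimes Y^{\vee\vee},
\end{align}
where in the last step the outer tensoring with $Y^{\vee\vee}$ passes through the coend because $-\otimes Y^{\vee\vee}$ is exact and hence preserves colimits. A routine check of the coherence diagrams (associators and unitors) then promotes this isomorphism to a right $\cat{C}$-module functor structure $\cat{C}_\cat{C}\to \cat{C}^{\vee\vee}$.

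Specializing the module structure to $X=I$ produces a canonical isomorphism $\naka(Y)\cong \naka(I)\otimes Y^{\vee\vee}$, so it only remains to identify the object $\naka(I)\in\cat{C}$ with the inverse $D^{-1}$ of the distinguished invertible object. This is the step I expect to be the main obstacle, because the distinguished invertible object of Etingof--Nikshych--Ostrik is defined indirectly through the action of $\cat{C}$ on the socle of the projective cover of $I$ (equivalently through the top of the injective envelope of $I$). I would match $\naka(I)$ to $D^{-1}$ by computing $\naka(I)=\int^{Y}\cat{C}(I,Y)^*\otimes Y$ in the presentation $\cat{C}\simeq A\mod$ of $\cat{C}$ as modules over a finite-dimensional (non-semisimple quasi-Hopf) algebra $A$: here $\naka(A)=A^*$ as $A$-bimodules, so $\naka$ sends the regular left module to $A^*$, and $\naka(I)$ comes out as the $A$-module $I\otimes_A A^*$, which is exactly the one encoding the Nakayama automorphism and hence represents $D^{-1}$ by the ENO definition. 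Alternatively, one can characterize $D^{-1}$ as the unique invertible object satisfying $\naka(Y)\cong D^{-1}\otimes Y^{\vee\vee}$ and show that $\naka(I)$ is invertible a priori, for instance by noting that $Y\mapsto \naka(I)\otimes Y^{\vee\vee}$ must coincide with $\naka$ on the projective generator and is therefore an equivalence, forcing $\naka(I)$ to be invertible.

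Combining the three ingredients, $\naka\cong \naka(I)\otimes -^{\vee\vee}\cong D^{-1}\otimes -^{\vee\vee}$, and since both $-\otimes D^{-1}$ and $-^{\vee\vee}$ are autoequivalences of $\cat{C}$, so is $\naka$. This simultaneously yields the module-functor equivalence $\cat{C}_\cat{C}\simeq \cat{C}^{\vee\vee}$, the canonical isomorphism $\naka(-\otimes X)\cong \naka(-)\otimes X^{\vee\vee}$, the identification $\naka I\cong D^{-1}$, and the final formula $\naka\cong D^{-1}\otimes -^{\vee\vee}$.
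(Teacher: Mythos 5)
The paper does not actually prove this statement; it imports it verbatim as \cite[Theorem~3.26]{fss}, so there is no internal proof to compare against. Judged on its own terms, your argument has a genuine error at its central step. You assert that $-\otimes Y^{\vee\vee}$ is an autoequivalence of $\cat{C}$ with inverse $-\otimes Y^{\vee}$, and that $W\otimes Y^{\vee\vee}\otimes Y^{\vee}\cong W$ ``via the duality pairing''. Neither holds unless $Y$ is invertible: in a rigid category $-\otimes Y^{\vee}$ and $-\otimes Y^{\vee\vee}$ form only an adjoint pair, and the pairing $d_{Y^{\vee}}:Y^{\vee\vee}\otimes Y^{\vee}\to I$ is an evaluation, not an isomorphism (take $\cat{C}=\Vect$ and $Y=k^{2}$). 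Consequently the ``reindexing'' of the coend along $Z=W\otimes Y^{\vee\vee}$ is not legitimate --- coends can only be reindexed along equivalences --- and even if it were, $\cat{C}(X,W\otimes Y^{\vee\vee}\otimes Y^{\vee})^{*}$ would not reduce to $\cat{C}(X,W)^{*}$. Since everything else in your proof (the module-functor structure, the specialization at $X=I$, the final formula) rests on this isomorphism, the argument as written does not go through.

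The isomorphism $\naka(X\otimes Y)\cong\naka(X)\otimes Y^{\vee\vee}$ is of course true; the standard way to obtain it is to trade the coend manipulation for an adjunction/Yoneda argument on projectives. For $W\in\Proj\cat{C}$ one has
\begin{align}
\cat{C}\bigl(W,\naka(X)\otimes Y^{\vee\vee}\bigr)\cong\cat{C}\bigl(W\otimes Y^{\vee},\naka X\bigr)\cong\cat{C}\bigl(X,W\otimes Y^{\vee}\bigr)^{*}\cong\cat{C}\bigl(X\otimes Y,W\bigr)^{*}\cong\cat{C}\bigl(W,\naka(X\otimes Y)\bigr) \ ,
\end{align}
using that $-\otimes Y^{\vee\vee}$ is \emph{right adjoint} to $-\otimes Y^{\vee}$, that $\Proj\cat{C}$ is a tensor ideal, and Proposition~\ref{propdualhom} twice; since the projective objects generate, this identifies the two functors. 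This is exactly the special case $F=-\otimes Y$ of the statement $\naka\circ F\cong F^{\mathrm{rra}}\circ\naka$ for right exact $F$ with double right adjoint $F^{\mathrm{rra}}$, which is how \cite{fss} proceeds and which the present paper also invokes in the proof of Proposition~\ref{proppartialtrace}. Your treatment of the remaining identification $\naka I\cong D^{-1}$ is honestly flagged as a sketch: the invertibility of $\naka I$ does follow once one knows $\naka$ is an equivalence (projectives coincide with injectives in a finite tensor category), but matching this invertible object against the distinguished invertible object of \cite{eno-d} still requires unwinding their definition, which you defer rather than carry out.
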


Together with Proposition~\ref{propdualhom},
this implies:

\begin{corollary}\label{cordualhom}
	For any  finite tensor category $\cat{C}$, there are canonical isomorphisms
	\begin{align} \cat{C}(X,Y_\bullet)^* \cong \cat{C}(Y_\bullet, D^{-1} \otimes  X^{\vee \vee}  ) \end{align} natural in objects $X,Y\in\cat{C}$, where $Y_\bullet$ is a projective resolution of $Y$.
	In particular, any pivotal structure on $\cat{C}$ provides canonical isomorphisms
	\begin{align} \cat{C}(X,Y_\bullet)^* \cong \cat{C}(Y_\bullet, D^{-1} \otimes  X  ) \ .  \end{align}
\end{corollary}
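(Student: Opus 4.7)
The statement is essentially an immediate consequence of two results already in hand: Proposition~\ref{propdualhom} supplies the isomorphism $\cat{C}(X,Y_\bullet)^* \cong \cat{C}(Y_\bullet,\naka X)$ for any finite category $\cat{C}$, and Theorem~\ref{thmnakamodule} identifies the right Nakayama functor of a finite tensor category as $\naka \cong D^{-1}\otimes -^{\vee\vee}$ by a canonical natural isomorphism. The plan, accordingly, is to stitch the two results together and then specialize under the assumption of a pivotal structure.

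For the first assertion, I would simply post-compose the isomorphism of Proposition~\ref{propdualhom} with the canonical isomorphism $\naka X \cong D^{-1}\otimes X^{\vee\vee}$ of Theorem~\ref{thmnakamodule}. Since $\cat{C}(Y_\bullet,-)$ is functorial and the Nakayama identification is natural in $X$, the composite
\begin{align}
\cat{C}(X,Y_\bullet)^* \cong \cat{C}(Y_\bullet,\naka X) \cong \cat{C}(Y_\bullet, D^{-1}\otimes X^{\vee\vee})
\end{align}
is natural in $X$ and $Y$. The right-hand side is to be read degree-wise in the projective resolution $Y_\bullet$, exactly as in Proposition~\ref{propdualhom}.

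For the second assertion, I would invoke the pivotal structure, i.e.\ a monoidal natural isomorphism $\id_\cat{C} \xrightarrow{\simeq} -^{\vee\vee}$. Tensoring with $D^{-1}$ on the left and applying $\cat{C}(Y_\bullet,-)$ then yields a natural isomorphism $\cat{C}(Y_\bullet, D^{-1}\otimes X^{\vee\vee}) \cong \cat{C}(Y_\bullet, D^{-1}\otimes X)$, which composed with the first isomorphism gives the desired canonical identification.

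There is essentially no hard step here: the only thing to check is that all the isomorphisms used are natural in the stated variables, which follows by transport of naturality along the canonical isomorphisms from Proposition~\ref{propdualhom} and Theorem~\ref{thmnakamodule}. The only mild subtlety worth flagging is that the identifications depend on the choice of pivotal structure in the second half of the statement, whereas the first half is canonical once $\cat{C}$ is given as a finite tensor category; this point, however, is purely notational and requires no further argument.
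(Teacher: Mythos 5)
Your proposal is correct and matches the paper's own (implicit) argument exactly: the paper derives this corollary by combining Proposition~\ref{propdualhom} with the identification $\naka \cong D^{-1}\otimes -^{\vee\vee}$ from Theorem~\ref{thmnakamodule}, and specializing via the pivotal structure for the second claim. Nothing further is needed.
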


We  now propose a generalization of the partial trace property that does not need a pivotal structure (from our perspective, this will turn out to be more natural):
Let $\cat{C}$ be a finite tensor category.
For $X\in\cat{C}$ and $P\in\Proj\cat{C}$, we may use Theorem~\ref{thmnakamodule} to define a 
map
\begin{align} \cat{C}\left(P\otimes X, \naka(P)\otimes X^{\vee \vee} \right) \to \cat{C}(P, \naka P ) \label{eqnptpart2}
\end{align} sending $f: P\otimes X \to \naka(P)\otimes X^{\vee \vee}$ to
\begin{align}
	P \ra{P\otimes b_X} P \otimes X \otimes X^\vee \ra{f\otimes X^\vee} \naka(P)\otimes X^{\vee \vee} \otimes X^\vee \ra{\naka(P)\otimes d_{X^\vee}} \naka P \ . 
\end{align}

\begin{definition}
	Let $\cat{C}$ be a finite tensor category, $P\in \Proj \cat{C}$ and $X\in\cat{C}$.
	Then we define the 
 \emph{right partial trace} as the composition
\begin{align}
	\trr^X: \cat{C}(P\otimes X, \naka(P\otimes X))  \ra{\text{Theorem~\ref{thmnakamodule}}} \cat{C}(P\otimes X, \naka(P)\otimes X^{\vee \vee} ) \ra{\eqref{eqnptpart2}} \cat{C}(P, \naka(P) ) \ .  \label{eqnpartialtrace0}
\end{align}
\end{definition}

All of this crucially uses that $P\otimes X$ (and also $X\otimes P$) is projective if $P$ is, i.e.\ the ideal property property of $\Proj \cat{C}$.

\begin{remark}\label{rempartialtrace} 
	A pivotal structure is not needed for the definition given here because the double dual is absorbed into the Nakayama functor. In presence of a pivotal structure $\omega : -^{\vee \vee}\cong \id_\cat{C}$, however, our definition specializes to the usual partial trace property for a \emph{right $D$-trace} in the terminology of \cite{mtrace}
	in the sense that the composition
	\begin{align}
		\cat{C}(D\otimes P\otimes X, P\otimes X) \cong 
		\cat{C}(P\otimes X, \naka(  P\otimes X)) \ra{\trr^X} \cat{C}(P, \naka(P) ) \ , 
	\end{align}
where the first isomorphism uses duality, Theorem~\ref{thmnakamodule} and the (inverse) pivotal structure,
	is the usual partial trace.
\end{remark}

\begin{proposition}\label{proppartialtrace}
	For any finite tensor category $\cat{C}$,
	the twisted trace 
	\begin{align}\trace_P : \cat{C}(P,\naka P) \to k \quad \text{for}\quad P\in \Proj \cat{C}  \end{align} 
	from Definition~\ref{deftracefc}
	satisfies the right partial trace property: For $X\in\cat{C}$ and $P\in\Proj \cat{C}$ and any morphism $f:P\otimes X\to \naka(P\otimes X)$ 
	\begin{align} \trace_P \trr^X (f)=\trace_{P\otimes X}( f) \ .\label{eqnpartialtrace} \end{align}
\end{proposition}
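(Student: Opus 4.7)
The plan is to rewrite both sides of the desired equation~\eqref{eqnpartialtrace} in terms of the canonical pairing $\spr{-,-}$ of Definition~\ref{deftracefc}, and then deduce the result from Theorem~\ref{thmnakamodule}, i.e.\ from the fact that the Nakayama functor is a module equivalence $\naka:\cat{C}_\cat{C}\ra{\simeq}\cat{C}^{\vee\vee}$. More concretely, by definition of the twisted trace,
\begin{align}
\trace_{P\otimes X}(f)=\spr{\id_{P\otimes X},f}_{P\otimes X}\ ,\qquad \trace_P\bigl(\trr^X(f)\bigr)=\spr{\id_P,\trr^X(f)}_P\ .
\end{align}
Writing $\tilde f:P\otimes X\to \naka(P)\otimes X^{\vee\vee}$ for the morphism obtained from $f$ by composing with the module-structure isomorphism $\naka(P\otimes X)\cong \naka(P)\otimes X^{\vee\vee}$ of Theorem~\ref{thmnakamodule}, the definition~\eqref{eqnpartialtrace0} unpacks $\trr^X(f)$ as a threefold composite, and naturality of the pairing in the first slot (equation~\eqref{hateqn1} from the proof of Lemma~\ref{lemmatrace}) lets us shift the factor $P\otimes b_X$ into the first slot of the pairing:
\begin{align}
\trace_P\bigl(\trr^X(f)\bigr)=\spr{P\otimes b_X,\,(\naka(P)\otimes d_{X^\vee})\circ(\tilde f\otimes X^\vee)}_P\ .
\end{align}

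Thus the proposition reduces to the identity
\begin{align}
\spr{\id_{P\otimes X},f}_{P\otimes X}=\spr{P\otimes b_X,\,(\naka(P)\otimes d_{X^\vee})\circ(\tilde f\otimes X^\vee)}_P\ ,\label{eqnkeystep}
\end{align}
which is an instance of the compatibility of the canonical isomorphisms~\eqref{eqntheiso} with the right $\cat{C}$-module structure. Indeed, the left-to-right duality adjunctions
\begin{align}
\cat{C}(P\otimes X,Y)\cong \cat{C}(P,Y\otimes X^\vee)\ ,\qquad \cat{C}(Y,\naka(P)\otimes X^{\vee\vee})\cong \cat{C}(Y\otimes X^\vee,\naka P)
\end{align}
send $\id_{P\otimes X}$ to $P\otimes b_X$ and $\tilde f$ to $(\naka(P)\otimes d_{X^\vee})(\tilde f\otimes X^\vee)$, respectively; the required equality~\eqref{eqnkeystep} is exactly the statement that the pairing $\spr{-,-}$ transforms consistently under the action of $X^\vee$ from the right.

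The main obstacle is this last step: to give a clean derivation of~\eqref{eqnkeystep} from Theorem~\ref{thmnakamodule}. The cleanest route is to observe that the module structure on $\naka$ is defined so as to make the isomorphism $\phi_P:\cat{C}(P,-)\cong \cat{C}(-,\naka P)^*$ natural in $P$ with respect to right tensoring, in the sense that $\phi_{P\otimes X}$ is computed from $\phi_P$ via precisely the two adjunctions above. This compatibility can be extracted either by unwinding the proof of \cite[Theorem~3.26]{fss} (where the module structure is produced from the coend description of $\naka$) or, more directly, by combining the coend formula~\eqref{defeqnnaka} with the Yoneda Lemma used in Proposition~\ref{propdualhom} and using projectivity of $P\otimes X$ to avoid any resolutions. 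Once this naturality is in hand, equation~\eqref{eqnkeystep} follows by a straightforward diagram chase, and the proposition is proved.
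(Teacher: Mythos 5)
Your proposal is correct and follows essentially the same route as the paper's proof: both sides are rewritten via the pairing $\spr{-,-}$, naturality is used to move $P\otimes b_X$ into the first slot, and everything is reduced to the compatibility of the isomorphisms of Corollary~\ref{cortheiso} with right tensoring by $X$ (your key identity), which is exactly what the paper establishes through its large commutative diagram built from the coend formula~\eqref{defeqnnaka}, the Yoneda Lemma, and the observation that the module structure of $\naka$ from Theorem~\ref{thmnakamodule} is \emph{obtained} by traversing that very diagram. The only substantive work you defer --- verifying this compatibility, together with the small auxiliary fact that monoidality of $-^{\vee\vee}$ identifies $\naka(P\otimes d_{^\vee X})$ with $\naka P\otimes d_{X^\vee}$ --- is carried out explicitly in the paper, and the routes you indicate for it are the ones the paper actually takes.
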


\begin{proof}
	For $X,Y\in\cat{C}$, $P\in\Proj\cat{C}$ and a projective resolution $Y_\bullet$ of $Y$,
	consider the following diagram in which all maps are isomorphisms (we explain all parts of the diagram and its commutativity afterwards):\small
	\begin{equation}
		\begin{tikzcd}
			\cat{C}(P\otimes X,Y_\bullet)^*\ar[d,swap,"\vee"] & \ar[l,swap,"\text{YL}"] \ar[d,swap,"\vee"]\ar[rr,"(\diamond)"] \int^{Z\in\cat{C}} \cat{C}(Y_\bullet,Z) \otimes \cat{C}(P\otimes X,Z)^* && \cat{C}(Y_\bullet,\naka (P\otimes X))\ar[dd,swap,"\naka(-\otimes X)\cong \naka(-)\otimes X^{\vee\vee}"] \\ \cat{C}(P,Y_\bullet \otimes X^\vee)^* & \ar[l,swap,"\text{YL}"] \int^{Z\in\cat{C}} \cat{C}(Y_\bullet,Z)\otimes\cat{C}(P,Z\otimes X^\vee)^* \ar[d,"\text{relabeling}"] \\ & \ar[lu,"\text{YL}"]\int^{Z' \in \cat{C}} \cat{C}(Y_\bullet\otimes X^\vee,Z')\otimes \cat{C}(P,Z')^* \ar[r,"(\diamond)"] & \cat{C}(Y_\bullet \otimes X^\vee,\naka P )  \ar[r,"\vee"]  & \cat{C}(Y_\bullet,\naka P \otimes X^{\vee \vee}) \ . 
		\end{tikzcd}
	\end{equation}\normalsize
	The isomorphisms labeled `YL' and `$\vee$' come from the Yoneda Lemma and duality, respectively. 
	The isomorphisms  $(\diamond)$
	pull  the coend and the tensoring with vector spaces out of the hom functor using  exactness of $\cat{C}(Y_\bullet,-)$
	 (they follow essentially from the definition~\eqref{defeqnnaka} of the Nakayama functor).
	The `relabeling' isomorphism  $\int^{Z\in\cat{C}} \cat{C}(Y_\bullet,Z)\otimes\cat{C}(P,Z\otimes X^\vee)^*\to \int^{Z' \in \cat{C}} \cat{C}(Y_\bullet\otimes X^\vee,Z')\otimes \cat{C}(P,Z')^*$ 
	sends 
	$
	f \otimes \alpha \in \cat{C}(Y_\bullet,Z)\otimes\cat{C}(P,Z\otimes X^\vee)^*
	$
	living in the summand indexed by $Z$ of the coend $\int^{Z\in\cat{C}} \cat{C}(Y_\bullet,Z)\otimes\cat{C}(P,Z\otimes X^\vee)^*$
	to $(f\otimes X^\vee) \otimes \alpha$ living in the summand indexed by $Z\otimes X^\vee$ of the coend $\int^{Z' \in \cat{C}} \cat{C}(Y_\bullet\otimes X^\vee,Z')\otimes \cat{C}(P,Z')^*$.
	The vertical isomorphism on the very right uses Theorem~\ref{thmnakamodule}. 
	
	In fact, the isomorphism  $\naka(-\otimes X)\cong \naka(-)\otimes X^{\vee\vee}$ can be \emph{obtained} by extracting the isomorphism $\cat{C}(Y_\bullet,\naka (P\otimes X))\to \cat{C}(Y_\bullet,\naka P\otimes X^{\vee\vee})$ by going in counterclockwise direction in the hexagon on the right (this follows from an analysis of the proof of \cite[Theorem~3.18]{fss}). 
	As a consequence, the hexagon on the right commutes.  
	
	A direct computation shows that the square and the triangle on the left commute. Therefore, the entire diagram commutes.
	
	After taking the linear dual of the entire diagram and remembering that the isomorphisms `YL' and $(\diamond)$ combine into the isomorphisms from Proposition~\ref{propdualhom}, we see that the diagram
	\begin{equation}
		\begin{tikzcd}
			\ar[rrr,"\text{Proposition~\ref{propdualhom}}"] \ar[dd,swap,"\vee"] \cat{C}(P\otimes X,Y_\bullet) &&& \cat{C}(Y_\bullet,\naka( P\otimes X)) ^*\cong \cat{C}(Y_\bullet,\naka P\otimes X^{\vee\vee}) ^*  \ar[dd,"\vee"]  \\ \\
			\cat{C}(P,Y_\bullet\otimes X^\vee) 	 \ar[rrr,swap,"\text{Proposition~\ref{propdualhom}}"]    &&& \cat{C}(Y_\bullet\otimes X^\vee,\naka P) ^*  \\
		\end{tikzcd}
	\end{equation}
	commutes. 
	Since $P$ is projective (and hence $\naka P$ injective --- in fact, the projective objects in $\cat{C}$  even coincide with the injective ones), this reduces to the commutative diagram
	\begin{equation}
		\begin{tikzcd}
			\ar[rrr,"\text{Corollary~\ref{cortheiso}}"] \ar[dd,swap,"\vee"] \cat{C}(P\otimes X,Y) &&& \cat{C}(Y,\naka( P\otimes X)) ^*\cong \cat{C}(Y,\naka P\otimes X^{\vee\vee}) ^*  \ar[dd,"\vee"]  \\ \\
			\cat{C}(P,Y\otimes X^\vee) 	 \ar[rrr,swap,"\text{Corollary~\ref{cortheiso}}"]    &&& \cat{C}(Y\otimes X^\vee,\naka P) ^*  \\
		\end{tikzcd}
	\end{equation}
	in which the horizontal maps have specialized to the ones from Corollary~\ref{cortheiso}.
	If we spell  out the commutativity of this diagram 
	in equations for morphisms $g:P\otimes X\to Y$ and $h:Y\otimes X^\vee \to \naka P$,
	we obtain with the bracket notation from Definition~\ref{deftracefc}
(we use here additionally the	graphical calculus for morphisms in a monoidal category --- to be read from bottom to top; we refer to \cite{kassel} for a textbook treatment)
	
	\begin{equation}{\footnotesize\tikzfig{trace}} \label{eqntrace}
	\end{equation}	
	As another preparation, recall that the double dual functor $-^{\vee \vee}:\cat{C}\to\cat{C}$ is monoidal, hence it  preserves the duality pairing $d _{{^\vee X}} : X \otimes {^\vee X} \to I$ (we use here the canonical identification ${^\vee (X^\vee)}\cong X$) and therefore sends it to $d_{X^\vee} :X^{\vee \vee} \otimes X^\vee \to I$. Using Theorem~\ref{thmnakamodule}   we find 
	the equality
	of morphisms
	\begin{align}	\naka (P\otimes d _{{^\vee X}})=\naka P\otimes d_{X^\vee} : \naka (P) \otimes X^{\vee\vee} \otimes X^\vee \to \naka P \ , \end{align}
	which implies for a morphism $f:P\otimes X\to \naka (P\otimes X)$
	
	\begin{equation}{\footnotesize\tikzfig{trace2}}\label{trace2}
	\end{equation}
	\normalsize
	The desired equality~\eqref{eqnpartialtrace} now follows from:	
	\footnotesize
	\begin{equation}\tikzfig{trace3}
	\end{equation}
	\normalsize\end{proof}

\begin{theorem}\label{thmmtrace}
	For any finite tensor category $\cat{C}$,
	the twisted trace
	$	(\trace_P : \cat{C}(P,\naka P) \to k)_{P\in \Proj\cat{C}}$ 
	from Definition~\ref{deftracefc}
	is cyclic, non-degenerate and satisfies the partial trace property
	in the sense of Proposition~\ref{proppartialtrace}.
	Under the additional assumption that on the finite tensor category $\cat{C}$ a pivotal structure has been chosen,
	the twisted trace
	$(	\trace_P : \cat{C}(P,\naka P) \to k)_{P\in\Proj\cat{C}}$ 
	from Definition~\ref{deftracefc}
	can be naturally identified with a right modified $D$-trace,
	where $D\in\cat{C}$ is the distinguished invertible object of $\cat{C}$.
\end{theorem}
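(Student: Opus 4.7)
The plan is to assemble the statement from results already established earlier in Section~\ref{sectracesfin} and Section~\ref{tracesonfinitfc}. Cyclicity and non-degeneracy of the twisted trace $(\trace_P)_{P\in\Proj\cat{C}}$ are formally derived from Corollary~\ref{cortheiso} alone and are therefore available for any finite category; this is precisely the content of Lemma~\ref{lemmatrace}, which then carries over to the finite tensor category case without further effort. The right partial trace property in the generalized form introduced for this paper is exactly Proposition~\ref{proppartialtrace}; that proposition is the step that genuinely uses rigidity and the module-functor structure of $\naka$, and its diagrammatic verification was already carried out. These three inputs together give the first half of the theorem.

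For the second half, assume a pivotal structure $\omega : -^{\vee\vee} \cong \id_\cat{C}$. Combining $\omega$ with the natural isomorphism $\naka \cong D^{-1}\otimes -^{\vee\vee}$ of Theorem~\ref{thmnakamodule} yields an isomorphism $\naka \cong D^{-1}\otimes -$ of right $\cat{C}$-module endofunctors. For each $P\in\Proj\cat{C}$ this, together with left duality, produces an identification
\begin{align}
\cat{C}(P,\naka P) \;\cong\; \cat{C}(P, D^{-1}\otimes P) \;\cong\; \cat{C}(D\otimes P, P) \ ,
\end{align}
so that $\trace_P$ transports to a linear map $\cat{C}(D\otimes P, P)\to k$. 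This is the shape of the data of a right $D$-trace on the tensor ideal $\Proj\cat{C}$ in the terminology of \cite{mtrace}. It remains to match the three axioms (cyclicity, non-degeneracy, right partial trace) under this identification with their standard counterparts for a right modified $D$-trace.

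Cyclicity and non-degeneracy transfer by inspection, since the identification is natural in $P$ and the relations of Lemma~\ref{lemmatrace} already involve arbitrary morphisms between projectives. The delicate point, and the only real obstacle, is the partial trace axiom: the module-functor isomorphism $\naka\cong D^{-1}\otimes-$ is assembled from $\omega$ and Theorem~\ref{thmnakamodule}, so one must check that our definition of $\trr^X$ in~\eqref{eqnpartialtrace0}, once pushed through these identifications, becomes the usual right partial trace that appears in the axiom for a modified $D$-trace. This bookkeeping is exactly what Remark~\ref{rempartialtrace} records: under the pivotal identification the composition in~\eqref{eqnpartialtrace0} reduces to the usual right partial trace. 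Hence Proposition~\ref{proppartialtrace} specializes to the standard partial trace property for a right modified $D$-trace, completing the identification and the theorem.
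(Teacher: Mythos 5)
Your proposal is correct and follows essentially the same route as the paper's own proof: cyclicity and non-degeneracy from Lemma~\ref{lemmatrace}, the generalized partial trace property from Proposition~\ref{proppartialtrace}, and then the transport along $\cat{C}(P,\naka P)\cong\cat{C}(P,D^{-1}\otimes P^{\vee\vee})\cong\cat{C}(D\otimes P,P)$ with Remark~\ref{rempartialtrace} handling the partial trace axiom. The only nuance the paper makes more explicit is that the isomorphism $-^{\vee\vee}\cong\id_\cat{C}$ must be \emph{monoidal} for the partial trace property to transfer, a point you capture implicitly by insisting on the right $\cat{C}$-module functor structure.
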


\begin{remark}
	More precisely, the twisted trace from Definition~\ref{deftracefc}
	yields a \emph{canonical} right modified $D$-trace and thereby 
	trivializes the $k^\times$-torsor of right modified $D$-traces in a canonical way. 
\end{remark}

\begin{proof}[{\slshape Proof of Theorem~\ref{thmmtrace}}]
	We use the pivotal structure $\omega:-^{\vee\vee}\cong \id_\cat{C}$ to obtain isomorphisms \begin{align} \cat{C}(P,\naka P)\stackrel{\text{Theorem~\ref{thmnakamodule}}}{\cong} \cat{C}(P,D^{-1}\otimes P^{\vee \vee}) \stackrel{\omega \ \text{and duality}}{\cong} \cat{C}(D\otimes P,P) \quad \text{for}\quad P\in\Proj \cat{C}  \ .  \label{eqnthemaps}
	\end{align} 
	As a consequence, the twisted trace from Definition~\ref{deftracefc} gives us maps
	$\cat{C}(D\otimes P,P)\to k$ which are cyclic and non-degenerate (Lemma~\ref{lemmatrace}).
	Moreover, Proposition~\ref{proppartialtrace} combined with Remark~\ref{rempartialtrace} gives us the usual partial trace property in presence of a pivotal structure.
	
	Note that one needs really a \emph{monoidal}
	 isomorphism $-^{\vee\vee}\cong \id_\cat{C}$ 
	to get the desired maps $\cat{C}(D\otimes P,P)\to k$. If $\omega$ is just linear, one would get maps $\cat{C}(D\otimes P,P)\to k$, but  they would not necessarily satisfy the partial trace property:
	The proof of the partial trace property in its $\naka$-twisted version (Proposition~\ref{proppartialtrace}) relies on the monoidal structure of $-^{\vee\vee}$. The partial trace property only transfers along the isomorphisms $\cat{C}(P,D^{-1}\otimes P^{\vee \vee})\cong \cat{C}(P,D^{-1}\otimes P)$ if $-^{\vee\vee}$ is replaced by $\id_\cat{C}$ \emph{as monoidal functor}.
\end{proof}

\spaceplease
\section{The trace field theory\label{sectracefieldtheory}}
We  now introduce the topological conformal field theory induced by the Calabi-Yau structure appearing the previous section.
To this end, let us recall from \cite{costellotcft} the definition of the (differential graded) open-closed two-dimensional cobordism category, see \cite{egas,wahlwesterland} for models of this symmetric monoidal differential graded category in terms of fat graphs.
An \emph{open-closed Riemann surface}
is a Riemann surface with the following data: 
\begin{itemize}
	
	\item A subset of its boundary components, the so-called \emph{closed boundary components}. They are para\-metrized and labeled as incoming or outgoing.
	
	\item A finite number of embedded intervals in the remaining boundary components,
	the so-called \emph{open boundary intervals}.
	They are also parametrized and labeled as incoming or outgoing.
	
\end{itemize} 
The free boundary components are defined as the complement (in the boundary) of the closed boundary components and the open boundary intervals. 
It will be required  that each connected component of the Riemann surface has at least one free boundary component or at least one incoming closed boundary.
An example (that additionally contains certain labels that will be discussed in a moment) is depicted in Figure~\ref{figoc}.

One can now define
the symmetric monoidal differential graded category $\OC$ of \emph{open-closed cobordisms} for a set $\Lambda$ of labels (that we will fix later and that will be suppressed in the notation; the set of labels is sometimes referred to as set of `D-branes'):
The objects are pairs of finite sets $O$ and $C$ (that in a moment will play the role of open boundary intervals and closed boundary components of Riemann surfaces) and two maps $s,t : O\to \Lambda$ (that attach a `start' and an `end' label 
to any
open boundary). 
The chain complex of morphisms from $(O,C,s,t)$ to $(O',C',s',t')$ is given by the $k$-chains on the moduli space of Riemann surfaces $\Sigma$ 
with \begin{itemize}
	\item an identification of its set of incoming open and incoming closed boundary components with $(O,C)$, an identification of its set of outgoing open and outgoing closed boundary components with $(O',C')$,
	\item a label in the set $\Lambda$ of D-branes for each free boundary component 
\end{itemize} 
subject to the following requirement:
First observe that any incoming open boundary interval $o\in O$ inherits a label for its start point and its end point, namely the label of the free boundary component that it is bounded by. We require that this label agrees with $(s(o),t(o))$; the analogous requirement is imposed for outgoing open boundary intervals.
Explicitly, for the objects $X=(O,C,s,t)$ and 
$X'=(O',C',s',t')$,
the morphism complexes are given, up to equivalence, by
\begin{align}
	\OC \left( X,X'  \right) \simeq \bigoplus_{S : X \to X'} C_*(B \Map(S);k) \ , 
	\end{align}
where the direct sum is running over all topological types of compact oriented open-closed bordisms $S$ with incoming and outgoing boundary described by $X$ and $X'$, respectively, and $\Map(S)$ is the mapping class group of $S$; we refer to \cite{egas,wahlwesterland} for a description of these morphism complexes by means of classifying spaces of categories of fat graphs.
Composition in $\OC$ is by gluing. Disjoint union provides a symmetric monoidal structure.

\begin{figure}[h]
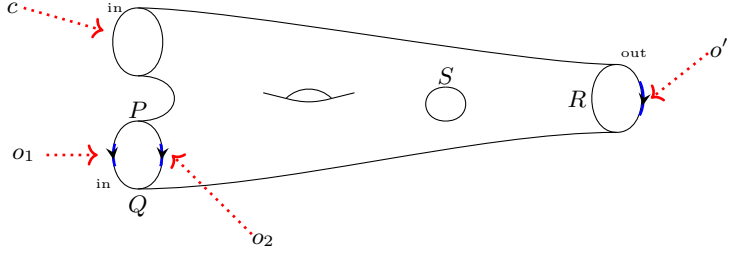
\centering
	\tikzfig{oc}
	\caption{An open-closed  surface with D-brane labels. As a morphism in $\OC$, we read the surface from left to right, i.e.\ with the source object (constituted by the incoming boundary components) on the left and the target object (constituted by the outgoing boundary components) on the right.
		We will, however, deviate from the left-to-right drawing convention at times if it simplifies the surface; for this reason, we also indicate by `in' and `out' whether a boundary component is incoming or outgoing. The source object is given by $\{c,o_1,o_2\}$ 
		(where the identification with boundary components is through the dotted arrows in the picture) plus the assignment $s(o_1)=s(o_2)=P$ and $t(o_1)=t(o_2)=Q$. The target object is $\{o'\}$ plus the assignments $s(o')=t(o')=  R$. }
	\label{figoc}
\end{figure}

\begin{definition}[Costello \cite{costellotcft} following Getzler~\cite{getzler} and Segal~\cite{segal}]For a fixed set $\Lambda$ 
	of $D$-branes, 
	an \emph{open-closed topological conformal field theory} is a symmetric monoidal functor $\Phi : \OC \to \Ch$.
	An \emph{open topological conformal field theory}
	is a symmetric monoidal functor $\catf{O}\to\Ch$ defined only on the subcategory $\catf{O}\subset \OC$ of open bordisms.
\end{definition}

Open-closed topological conformal field theories
are a differential graded generalization
of ordinary vector space-valued 
two-dimensional (open-closed) topological field theories.
The latter can be constructed and 
 classified in terms of symmetric and 
commutative Frobenius algebras, 
see \cite{kock,laudapfeiffer} for the precise statements.

In \cite{costellotcft},
Costello proves that one may construct an \emph{open} topological conformal field theory from a (linear) Calabi-Yau category (Costello actually considers differential graded Calabi-Yau categories, but we just need the linear case).
By homotopy left Kan extension, one obtains an open-closed topological conformal field theory:

\spaceplease
\begin{theorem}[Costello \cite{costellotcft}]\label{thmcostello}\begin{pnum}\item
	Any linear Calabi-Yau category $\cat{A}$ gives rise to
	an open-closed topological conformal field theory $\OC \to \Ch$ with the object set of $\cat{A}$ as the set of $D$-branes.
	\item The value of this field theory on the circle is equivalent to the Hochschild complex of $\cat{A}$. \end{pnum}
\end{theorem}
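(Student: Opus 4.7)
The proof follows Costello's two-step strategy. First, build an open topological conformal field theory $\Phi^\catf{o}:\catf{O}\to\Ch$ directly from the linear Calabi-Yau structure on $\cat{A}$; second, promote it to an open-closed theory $\Phi:\OC\to\Ch$ by homotopy left Kan extension along the inclusion $\catf{O}\hookrightarrow\OC$, and finally identify the resulting value on the circle with the Hochschild complex of $\cat{A}$.

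For the open part, I would work with an explicit fat graph model of the morphism complexes of $\catf{O}$, as in Costello's original treatment and in the refinements by Egas, Kupers, Wahl and Westerland. To an oriented fat graph $\Gamma$ whose boundary cycles have been decorated by objects of $\cat{A}$ one assigns the complex obtained by tensoring $\cat{A}(X_i,X_j)$ over the edges, where $X_i,X_j$ are the labels of the two boundary cycles incident to the edge; internal vertices are evaluated by iterated composition, and cyclic contractions along closed boundary cycles are evaluated by the Calabi-Yau trace. Cyclicity and non-degeneracy of the pairing ensure that this assignment is invariant, up to quasi-isomorphism, under the elementary fat-graph moves that generate the weak equivalences in the chosen model. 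Symmetric monoidality is immediate from disjoint union of graphs, and compatibility with gluing amounts to the observation that concatenating fat graphs along common boundary intervals corresponds, at the algebraic level, to contracting the associated tensor factor via the Yoneda pairing in $\cat{A}$.

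For the open-closed extension, I would set $\Phi:=\hoLan_{\catf{O}\hookrightarrow\OC}\Phi^\catf{o}$. To evaluate $\Phi$ on a closed circle, one uses that a closed boundary component can be resolved by open bordisms obtained by cutting a small disk around a marked point; the cofinal diagram indexing this resolution has as values decorated disks with one free boundary arc, coupled along the cyclic rotations of the arc. A direct inspection identifies the homotopy colimit of this diagram with the homotopy coend
\begin{align}
\lint^{X\in\cat{A}}\cat{A}(X,X),
\end{align}
which is a standard model for the Hochschild complex of $\cat{A}$.

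The main technical obstacle is the invariance of the fat-graph assignment under Whitehead moves and the analogous elementary reorganizations: this is the step where cyclicity and non-degeneracy of the Calabi-Yau trace enter in a genuinely non-formal way, since different fat-graph presentations of the same moduli space correspond to distinct but quasi-isomorphic reorganizations of tensor products of hom complexes. Once this rigidity statement is secured, both the symmetric monoidal functoriality on $\catf{O}$ and the identification of $\Phi(S^1)$ with the Hochschild complex through the Kan extension reduce to formal manipulations with coends, homotopy colimits, and universal properties.
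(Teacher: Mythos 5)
This theorem is not proved in the paper at all: it is imported verbatim from Costello \cite{costellotcft} (with the positive-characteristic extension delegated to \cite{egas,wahlwesterland}), so there is no in-paper argument to compare yours against. Measured against Costello's actual proof, your outline has the right architecture --- construct the open theory $\Phi^\catf{o}:\catf{O}\to\Ch$ from the Calabi-Yau data via a fat graph model, set $\Phi:=\hoLan_{\catf{O}\hookrightarrow\OC}\Phi^\catf{o}$, and identify $\Phi(S^1)$ with $\lint^{X\in\cat{A}}\cat{A}(X,X)$ by a cofinality argument --- and this is indeed the strategy used both by Costello and in the chain-level refinements of Egas and Wahl--Westerland.

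As a proof, however, the proposal only names the two steps that constitute essentially all of the content. First, the claim that the fat-graph assignment is invariant up to coherent quasi-isomorphism under Whitehead moves (equivalently, that it descends from the category of fat graphs to the moduli spaces whose chains form the morphism complexes of $\OC$) is exactly the hard part of constructing $\Phi^\catf{o}$; asserting that ``cyclicity and non-degeneracy ensure'' it is not an argument, and in Costello's treatment this is handled by a genuine obstruction-theoretic/deformation argument rather than a move-by-move check. Your description of the assignment is also imprecise in the open sector: hom spaces $\cat{A}(X,Y)$ are attached to open boundary intervals according to the D-brane labels of the two adjacent free boundary arcs, not ``over the edges'' indexed by boundary cycles, and the Calabi-Yau pairing enters through the trace on the outgoing intervals, not through ``cyclic contractions along closed boundary cycles'' (there are no closed boundaries in $\catf{O}$). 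Second, the identification of $\Phi(S^1)$ with the Hochschild complex requires showing that a specific subcategory of open-to-closed annuli is homotopy cofinal in the comma category computing the pointwise homotopy left Kan extension; ``a direct inspection identifies the homotopy colimit'' skips this cofinality verification, which is where the cyclic bar construction actually appears. So the skeleton is correct, but both load-bearing steps are gaps.
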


In particular, if $\cat{M}_{p,q}$ is the moduli space of Riemann surfaces with $p$ incoming closed $(p\ge 1)$, $q$ outgoing closed and no open and no free boundary components, there are maps 
\begin{align} C_*(\cat{M}_{p,q};k) \otimes \left(    \lint^{a \in \cat{A}}\cat{A}(a,a)\right)^{\otimes p} \to \left(    \lint^{a \in \cat{A}}\cat{A}(a,a)\right)^{\otimes q} \ .  
\end{align}
We refer to \cite{dva} for details on the homotopy coends appearing here.
In the present text, this is only needed to a very limited extent.
It suffices to know that in degree zero the complex $\lint^{a \in \cat{A}} \cat{A}(a,a)$ is given by
$\bigoplus_{a\in\cat{A}} \cat{A}(a,a)$.

\begin{remark}
	In \cite{costellotcft}, it is actually required that $k$ has characteristic zero, but an extension to fields of arbitrary characteristic is given in \cite{egas,wahlwesterland}.
	\end{remark}

From Corollary~\ref{cortheiso} and Costello's result, we immediately obtain:

\begin{corollary}\label{corollarytracefieldtheory}
	For a finite category $\cat{C}$, any trivialization of the 
	Nakayama functor
$
	\naka :\cat{C}\to\cat{C}$
	yields a Calabi-Yau structure on $\Proj \cat{C}$
	and hence gives rise to a topological conformal field theory $\Phi_\cat{C} : \OC \to \Ch$ with set of D-branes given by the set of projective objects of $\cat{C}$. 
\end{corollary}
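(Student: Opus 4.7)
The plan is to assemble the statement from two ingredients already established in the paper: the $\naka$-twisted Calabi-Yau structure on $\Proj\cat{C}$ coming from Corollary~\ref{cortheiso}, and Costello's Theorem~\ref{thmcostello} which takes a genuine (untwisted) linear Calabi-Yau category as input and produces an open-closed topological conformal field theory. The role of the trivialization is precisely to bridge the gap between these two, by untwisting the Calabi-Yau structure.

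More concretely, I would first observe that Corollary~\ref{cortheiso} provides natural isomorphisms $\cat{C}(P,X)\cong \cat{C}(X,\naka P)^*$ for $P\in\Proj\cat{C}$ and $X\in\cat{C}$, and in particular for $X\in\Proj\cat{C}$. Given a natural isomorphism $\eta : \naka \xrightarrow{\cong}\id_\cat{C}$ (the trivialization), post-composition with $\eta_P : \naka P \to P$ induces a natural isomorphism $\cat{C}(X,\naka P)^*\cong \cat{C}(X,P)^*$. Composing with the isomorphism from Corollary~\ref{cortheiso} yields natural isomorphisms
\begin{align}
\cat{C}(P,X)\cong \cat{C}(X,P)^* \quad \text{for}\quad P,X\in \Proj\cat{C} \ ,
\end{align}
which is exactly the data of a (linear, untwisted) Calabi-Yau structure on $\Proj\cat{C}$, in accordance with Definition (the $(F,G)=(\id,\id)$ case).

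The second step is simply to feed the resulting Calabi-Yau category $\Proj\cat{C}$ into Costello's construction (Theorem~\ref{thmcostello}). This produces a symmetric monoidal functor $\Phi_\cat{C}:\OC\to\Ch$ whose set of $D$-branes is the object set of $\Proj\cat{C}$, i.e.\ the set of projective objects of $\cat{C}$. No additional check is needed because Costello's theorem applies verbatim to any linear Calabi-Yau category; the paper explicitly notes that its linear case is what is being used here.

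The only mild subtlety, and the point where care is warranted, is verifying that the untwisted pairing is indeed symmetric (or cyclic), i.e.\ that the Calabi-Yau property holds. This follows from the cyclicity of the twisted trace established in Lemma~\ref{lemmatrace} combined with the naturality of $\eta$: cyclicity of the twisted pairing $\spr{-,-}$ together with the identity $\naka(g)\circ\eta_Q^{-1} = \eta_P^{-1}\circ g$ for $g:Q\to P$ (naturality of $\eta^{-1}$) translates directly into cyclicity of the untwisted pairing. Once this compatibility is noted, the corollary follows by direct invocation of Theorem~\ref{thmcostello}.
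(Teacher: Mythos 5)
Your proposal is correct and follows essentially the same route as the paper, which derives the corollary immediately by untwisting the $\naka$-twisted Calabi-Yau structure of Corollary~\ref{cortheiso} via the chosen trivialization and then invoking Theorem~\ref{thmcostello}. Your extra check that cyclicity of the untwisted pairing follows from Lemma~\ref{lemmatrace} together with naturality of the trivialization is a worthwhile detail that the paper leaves implicit.
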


\begin{remark}\label{remtraces}
	The evaluation of the field theory $\Phi_\cat{C}$ on the disk 
	with one incoming open boundary interval whose complementing free boundary carries the D-brane label $P\in\Proj\cat{C}$
	\begin{equation} \Phi_\cat{C} \left( \tikzfig{diskohne}  \right)\  : \  \cat{C}(P,P)\to k 
	\end{equation}
	is exactly the trace function of the Calabi-Yau structure from Definition~\ref{deftracefc} (this follows directly from Costello's construction), while
	for $P,Q\in \Proj\cat{C}$,
	the map 
		\begin{equation} \Phi_\cat{C} \left( \tikzfig{composition}  \right)\  : \  \cat{C}(P,Q) \otimes \cat{C}(Q,P) \to \cat{C}(P,P) 
	\end{equation}
is the composition over $Q$. 
	\end{remark}

The construction of Corollary~\ref{corollarytracefieldtheory} does not do much:
It just translates a trivialization of $\naka$ to a Calabi-Yau structure and then a topological conformal field theory, and in fact, this construction 
will only be of limited use to us since we want to treat finite tensor categories and not just linear categories.
Fortunately, we can give a natural refinement: The construction from Corollary~\ref{corollarytracefieldtheory} becomes more meaningful
in the context of finite tensor categories if $\naka$ is trivialized not just as a linear functor, \emph{but as a right $\cat{C}$-module functor relative to a pivotal structure}. Let us define what we mean by that:

\begin{definition}\label{defsymfrob}
	For any finite tensor category $\cat{C}$ and a pivotal structure $\omega: -^{\vee \vee} \cong \id_\cat{C}$,
	denote by $(\id_\cat{C},\omega)$ the identity functor endowed with the structure of a right $\cat{C}$-module functor $\cat{C}_\cat{C}\to\cat{C}^{\vee\vee}$ by means of $\omega$.
	We refer to an isomorphism $\naka \cong (\id_\cat{C},\omega)$
	of right $\cat{C}$-module functors as a trivialization of $\naka$ as a right $\cat{C}$-module functor relative to $\omega$.
	We define a \emph{symmetric Frobenius structure} on a finite tensor category $\cat{C}$
	as a trivialization of $\naka$ as right $\cat{C}$-module functor relative to a pivotal structure, where the pivotal structure is part of the data.
	\end{definition}

\begin{remark}\label{remunpacksymFrob}
	Thanks to Theorem~\ref{thmnakamodule},
	 a symmetric Frobenius structure on a finite tensor category is a pivotal structure plus a trivialization of $D$. We use the term symmetric Frobenius structure not only as convenient shorthand for the rather clumsy description of `pivotal unimodular finite tensor category with a trivialization of the distinguished invertible object as part of the data', but also for a deeper reason: The symmetric Frobenius algebra structure on a finite tensor category allows us to write $\cat{C}$, as a linear category, as modules over a symmetric Frobenius algebra. This can be 
	read off from the  Corollary~\ref{cordualhom} 
	because it provides
	canonical natural isomorphisms
		\begin{align} \cat{C}(X,Y_\bullet)^* \cong \cat{C}(Y_\bullet, X  ) \ ,  \end{align}
	where $X\in\cat{C}$
	and $Y_\bullet$ is a projective resolution of $Y\in\cat{C}$. 
	However,
	a finite tensor category with symmetric Frobenius structure requires
	a compatibility of the monoidal structure with
	the trivialization of $\naka$. It is not just an identification of $\cat{C}$, as \emph{linear category}, with modules over a symmetric Frobenius algebra. In the latter sense, the notion is used in \cite{shimizucoend}. 
	\end{remark}

\begin{definition}\label{deftracefieldtheory}
	Let $\cat{C}$ be a finite tensor category with symmetric Frobenius structure.
	For the trivialization of the right Nakayama functor $\naka$ (that $\cat{C}$ 
	by Definition~\ref{defsymfrob}
	 comes equipped with), we refer to the topological conformal field theory 
	$\Phi_\cat{C} : \OC \to \Ch$ built from \emph{this particular} trivialization in the sense of Corollary~\ref{corollarytracefieldtheory}
	as the \emph{trace field theory of $\cat{C}$}. 
	\end{definition}

The name is  chosen because $\Phi_\cat{C}$ 
does not only recover the trace of the Calabi-Yau structure by Remark~\ref{remtraces}, but can also be  recovered from the trace itself.

\begin{theorem}\label{thmtracefieldtheory}
Let $\cat{C}$ be a finite tensor category with symmetric Frobenius structure and $\Phi_\cat{C}:\OC\to \Ch$ its trace field theory.
The evaluation of $\Phi_\cat{C}$ on the disk 
with one incoming open boundary interval whose complementing free boundary carries the label $P\in\Proj\cat{C}$
\begin{equation} \Phi_\cat{C} \left( \tikzfig{diskohne}  \right)\  : \  \cat{C}(P,P)\to k 
\end{equation}
is a right modified trace, while the evaluation of $\Phi_\cat{C}$ on the cylinder with one incoming open boundary interval with complementing free boundary label $P\in\Proj\cat{C}$ and one outgoing closed boundary circle
\begin{align}
	\Phi_\cat{C} \left( \tikzfig{ht}  \right)\  : \  \cat{C}(P,P)\to \lint^{P\in\Proj\cat{C}} \cat{C}(P,P) \label{eqnht}
\end{align}
agrees, after taking zeroth homology, with the Hattori-Stallings trace of $\cat{C}$.
\end{theorem}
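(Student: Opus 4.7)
My plan is to handle the two claims separately. The first assertion follows by chaining results already established in the paper. By Remark~\ref{remtraces}, the disk evaluation of $\Phi_\cat{C}$ is precisely the trace $\trace_P : \cat{C}(P,\naka P) \to k$ of Definition~\ref{deftracefc}. By Definition~\ref{defsymfrob} together with Remark~\ref{remunpacksymFrob}, the symmetric Frobenius structure supplies a pivotal isomorphism $\omega : -^{\vee\vee} \cong \id_\cat{C}$ together with a trivialization $D \cong I$. Theorem~\ref{thmmtrace} identifies $\trace_P$, in the presence of a pivotal structure, with a right modified $D$-trace; after applying the trivialization $D\cong I$, the $D$-twist becomes trivial and we obtain an honest right modified trace $\cat{C}(P,P)\to k$, as claimed.

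For the cylinder, the plan is to unpack Costello's construction on this specific surface and then match the resulting map to the Hattori-Stallings trace on zeroth homology. Choose a finite-dimensional algebra $A$ with $\cat{C}\simeq A\mod$. In homological degree zero the complex $\lint^{P\in\Proj\cat{C}}\cat{C}(P,P)$ reduces to the ordinary coend $\bigoplus_{P}\cat{C}(P,P)/\!\sim$, where $gf\sim fg$ is imposed whenever $f:P\to Q$ and $g:Q\to P$; this coend is canonically identified with $HH_0(A)=A/[A,A]$. Costello's evaluation on the open-to-closed cylinder with free boundary label $P$ is the natural structure map of this homotopy coend, sending $f\in\cat{C}(P,P)$ to its class in the coend; in the fat-graph models of \cite{egas,wahlwesterland} realizing $\OC$, this corresponds to the simplest "whiskered loop" graph.

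The main obstacle is to verify that this structure map, at the level of $H_0$, coincides with the Hattori-Stallings trace $\cat{C}(P,P)\to HH_0(A)$. I would exploit the universal property of $\HS$ as the universal $k$-linear cyclic trace on the category of finitely generated projective $A$-modules with values in $HH_0(A)$: the cylinder evaluation produces such a trace, because cyclicity $\HS(fg)\equiv \HS(gf)$ modulo boundaries is enforced by the existence in $\OC$ of bordisms realizing $fg\sim gf$ as boundaries in the Hochschild complex. Alternatively, one reduces directly to the case $P=A$ by writing $P$ as a direct summand of $A^n$ via $\iota:P\hookrightarrow A^n$ and $\pi:A^n\twoheadrightarrow P$; on $A^n$ Costello's construction specializes to the matrix trace landing in $A/[A,A]$, and naturality in $(\iota,\pi)$ then yields the Hattori-Stallings trace for general $P$. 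Either route identifies the cylinder evaluation with $\HS$ after taking $H_0$, completing the proof.
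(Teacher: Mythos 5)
Your proposal is correct and follows essentially the same route as the paper: the disk statement is obtained by combining Remark~\ref{remtraces} with Theorem~\ref{thmmtrace} (using the symmetric Frobenius structure to untwist the $D$-trace), and the cylinder statement by observing that Costello's construction makes \eqref{eqnht} the structure map of the homotopy coend, hence the quotient map $\cat{C}(P,P)\to HH_0(\cat{C})$ in degree zero. The only difference is that where you verify the identification $HH_0(\cat{C})\cong A/[A,A]$ and its compatibility with the classical Hattori--Stallings trace by an elementary reduction to free modules, the paper simply cites the Agreement Principle of McCarthy and Keller; both are adequate.
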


\begin{proof}
	The statement about the modified trace can be extracted from Theorem~\ref{thmmtrace}
	and Remark~\ref{remtraces}.

	For the second statement,
	we first observe that by the construction of $\Phi_\cat{C}$
	the map~\eqref{eqnht} is just the inclusion of $\cat{C}(P,P)$ into the direct sum $\bigoplus_{P\in\Proj\cat{C}} \cat{C}(P,P)$, which is the degree zero term of the Hochschild complex. After taking homology, we get the natural map
	$\cat{C}(P,P)\to HH_0(\cat{C})$, i.e.\ the quotient map projecting to zeroth Hochschild homology, and hence the Hattori-Stallings trace for $\cat{C}$.
	The connection to the traditional Hattori-Stallings trace \cite{hattori,stallings}
	 uses  that by writing $\cat{C}$, as a linear category,
	  as finite-dimensional modules over a finite-dimensional algebra $A$ (which we can always do), the zeroth homology
	$HH_0(\cat{C})$ is isomorphic to the zeroth Hochschild homology $HH_0(A)=A/[A,A]$
	of $A$. This is a consequence of the Agreement Principle
	of McCarthy \cite{mcarthy} and Keller \cite{keller}, see also \cite[Section~3.2]{dva} for this principle in the context of finite tensor categories.
	\end{proof}

We formulate the result in Theorem~\ref{thmtracefieldtheory}
topologically (although the  Theorem~\ref{thmmtrace} that it relies on is purely algebraic) because, instead of traces, we will  in Section~\ref{sectrivE2}  use the trace field theory as an efficient tool
for computations.

\begin{remark}
	The reader should appreciate that the trace field $\Phi_\cat{C}$ is defined through a specific trivialization of the Nakayama functor and not by choosing a modified trace (although Theorem~\ref{thmtracefieldtheory} tells us that we could have done that).
	This has the advantage that, through the closed formula for $\naka$, the trace field theory $\Phi_\cat{C}$ becomes very accessible.
	In fact, we will rely on the particular definition of $\Phi_\cat{C}$ given above in future work.
\end{remark}

\spaceplease
\section{The block diagonal product on Hochschild chains\label{sectrivE2}}
We will now see how we can profit from the topological description of traces. First we define a multiplication by evaluation on the pair of pants:

\begin{definition}\label{defstarprod}
Let $\cat{C}$ be a finite tensor category with symmetric Frobenius structure and $\Phi_\cat{C}:\OC\to \Ch$ the associated trace field theory.
Then we define the \emph{block diagonal $\star$-product} on the Hochschild complex $\lint^{X\in\Proj\cat{C}} \cat{C}(X,X)$ by
\begin{align}
\star := \Phi_\cat{C}\left(\tikzfig{pop}\right)\ : \ \lint^{X\in\Proj\cat{C}} \cat{C}(X,X) \otimes \lint^{X\in\Proj\cat{C}} \cat{C}(X,X)\to \lint^{X\in\Proj\cat{C}} \cat{C}(X,X) \, . \label{eqnstarproduct}
\end{align}
\end{definition}
The sense in which the product $\star$ is block diagonal will be discussed in Proposition~\ref{propdiag}.

The results of 
Wahl and Westerland on the product obtained from a topological conformal field theory in \cite[Section~6]{wahlwesterland} imply
that, up to homotopy, the multiplication is concentrated in degree zero (they prove it for symmetric Frobenius algebras, but their proof carries over to our situation). 	
They also give a formula for the degree zero part of the homotopy commutative multiplication. We will below give a slightly different formula which, when working with a Calabi-Yau category instead of a symmetric Frobenius algebra, is a little more convenient.

\begin{lemma}\label{lemmadeligneinH0}
	The degree zero part
	\begin{align}
	\star\ :	\bigoplus_{P,Q \in \Proj \cat{C}} \cat{C}(P,P)\otimes\cat{C}(Q,Q) \to \bigoplus_{P\in \Proj \cat{C}} \cat{C}(P,P) 
	\end{align}
	of the product from Definition~\ref{eqnstarproduct} is given on the summand $\cat{C}(P,P)\otimes\cat{C}(Q,Q)$, up to boundary,  by the linear map
	\begin{equation}
	\Phi_\cat{C}\left(\tikzfig{multchain}\right) \  : \ \cat{C}(P,P)\otimes\cat{C}(Q,Q) \to \cat{C}(P,P) \ . \label{multviaPhieqn}
	\end{equation}
\end{lemma}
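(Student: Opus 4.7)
My plan combines three ingredients: the concentration of the product in degree zero due to Wahl--Westerland, the identification of the degree zero inclusion $\cat{C}(P,P)\hookrightarrow \lint^X\cat{C}(X,X)$ from Theorem~\ref{thmtracefieldtheory}, and the symmetric monoidal functoriality of $\Phi_\cat{C}$ with respect to gluing of bordisms in $\OC$.

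First, I would invoke the analysis of the pair of pants product in \cite[Section~6]{wahlwesterland}. Although Wahl and Westerland formulate their result for symmetric Frobenius algebras, the argument is purely topological at the level of chains on moduli spaces and carries over to the Calabi--Yau category setting of Costello's theorem without modification. It yields that, up to homotopy, the product $\star$ is concentrated in degree zero, so that it suffices to exhibit a chain representing $\star$ on the summand $\cat{C}(P,P)\otimes\cat{C}(Q,Q)$ modulo boundary.

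Second, Theorem~\ref{thmtracefieldtheory} identifies the inclusion of $\cat{C}(P,P)$ as a summand in the degree zero part of $\lint^X\cat{C}(X,X)$ with $\Phi_\cat{C}$ evaluated on the cylinder \tikzfig{ht}. Since $\Phi_\cat{C}$ is a symmetric monoidal functor out of $\OC$ and composition in $\OC$ is gluing of bordisms, the restriction of $\star$ to $\cat{C}(P,P)\otimes\cat{C}(Q,Q)$ equals $\Phi_\cat{C}$ of the surface obtained by gluing two copies of \tikzfig{ht} to the two incoming closed boundary circles of the pair of pants. I would then identify this glued surface, modulo boundary in the morphism complex of $\OC$, with the surface \tikzfig{multchain} whose $\Phi_\cat{C}$-evaluation appears in~\eqref{multviaPhieqn}.

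The main obstacle is this last identification: it amounts to a chain-level homotopy in $\OC$ that replaces an outgoing closed boundary circle by an outgoing open boundary interval with appropriate free boundary label. This is the open-closed analogue of the degree zero calculation carried out by Wahl and Westerland for symmetric Frobenius algebras, and can be made explicit in a fat graph model of $\OC$ as in \cite{egas,wahlwesterland}. Applying $\Phi_\cat{C}$ to this chain homotopy, together with the degree zero concentration statement from the first step, produces the desired equality of maps up to boundary.
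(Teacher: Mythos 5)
Your overall strategy---functoriality of $\Phi_\cat{C}$ under gluing plus a homotopy of surfaces in $\OC$---is the one the paper uses, and your first two steps are fine (the Wahl--Westerland concentration statement is in fact not needed for this particular lemma, which only concerns the degree zero part; the paper invokes it separately to justify that nothing is lost by working in degree zero). The gap is in your final step. You propose to identify the glued surface (the pair of pants with two open-to-closed cylinders attached to its incoming circles) with the surface of~\eqref{multviaPhieqn} ``modulo boundary in the morphism complex of $\OC$''. These two surfaces do not lie in the same morphism complex: the first is a morphism from two open intervals to one \emph{closed} circle, the second a morphism from two open intervals to one \emph{open} interval. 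A boundary in $\OC(X,X')$ can only relate chains with the same source and target, so there is no chain-level homotopy in $\OC$ that ``replaces an outgoing closed boundary circle by an outgoing open boundary interval''; that operation is composition with a further bordism, not a homotopy, and as stated this step cannot be carried out.

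The correct comparison, which is what the paper does, is between two composites with the \emph{same} target: on one side the pair of pants precomposed with two copies of the cylinder~\eqref{eqnht}, on the other side the surface of~\eqref{multviaPhieqn} postcomposed with one copy of that cylinder. Both are morphisms from the two open intervals to the closed circle, of the same topological type with the same boundary data, hence deformable into each other and therefore homologous zero chains. This yields a square in $\OC$ commuting up to boundary; applying $\Phi_\cat{C}$ and using that in degree zero the cylinder induces the summand inclusion $\cat{C}(P,P)\hookrightarrow\bigoplus_{X\in\Proj\cat{C}}\cat{C}(X,X)$, one reads off that the degree zero part of $\star$ on the summand $\cat{C}(P,P)\otimes\cat{C}(Q,Q)$ is, up to boundary, the map~\eqref{multviaPhieqn}. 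With this repair your argument coincides with the paper's proof.
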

	
\begin{proof} Let $P$ and $Q$ be projective objects in $\cat{C}$.
	The following morphisms in $\OC$ can be deformed into each other and hence represent homologous zero chains:
	\begin{align}{\footnotesize \tikzfig{pairofpants}}
	\end{align}
	But this means that the square in $\OC$
	\begin{align} \tikzfig{multsquare}
	\end{align}
	commutes up to boundary.  If we apply $\Phi_\cat{C}:\OC\to\Ch$ to the square, we see that the square
		\begin{equation}
	\begin{tikzcd}
	\ar[rrr,"\eqref{multviaPhieqn}"] \ar[dd,swap] \cat{C}(P,P)\otimes\cat{C}(Q,Q) &&& \cat{C}(P,P)  \ar[dd]  \\ \\
	\lint^{X\in\Proj\cat{C}} \cat{C}(X,X) \otimes \lint^{X\in\Proj\cat{C}} \cat{C}(X,X) 	 \ar[rrr,swap,"\star"]    &&& \lint^{X\in\Proj\cat{C}} \cat{C}(X,X)  \\
	\end{tikzcd}
	\end{equation}
	commutes up to chain homotopy, where the vertical maps are just the usual embeddings of endomorphism spaces as summands in the Hochschild complex. Since we now recover the linear map \eqref{multviaPhieqn} as the upper horizontal arrow, the assertion follows. 
	\end{proof}

In the case of one object,
using the Sweedler notation for the coproduct of a symmetric Frobenius algebra, we recover the formula of Wahl and Westerland \cite[Section~6, page~41]{wahlwesterland} up to boundary.

In the sequel, it will always be implicit that the $\star$-product is applied in degree zero (because of the fact that it only contains information in that particular degree).

\begin{proposition}
	\label{propdiag}
	For any finite tensor category $\cat{C}$ 
	with symmetric Frobenius structure, the product $\star$ is block diagonal in the sense that it vanishes on two elements in components indexed by projective objects $P$ and $Q$ with vanishing morphism space $\cat{C}(P,Q)$;
	\begin{align} f\star g= 0 \quad 
		\text{for}\quad f \in \cat{C}(P,P)\ , \quad g \in \cat{C}(Q,Q) \quad \text{if}\quad \cat{C}(P,Q)=0 \quad \text{(or equivalently $\cat{C}(Q,P)=0$)} \ . 
	\end{align}
\end{proposition}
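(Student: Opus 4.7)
The plan is to reduce the statement to a degree-zero claim about the concrete map built by $\Phi_\cat{C}$ and then detect a $\cat{C}(P,Q)$-factor in its definition, so that the map vanishes automatically as soon as this hom space is zero.

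First, by Lemma~\ref{lemmadeligneinH0} the $\star$-product, restricted to the summand $\cat{C}(P,P)\otimes\cat{C}(Q,Q)$, is represented up to boundary by the linear map in~\eqref{multviaPhieqn}. Since $\star$ is concentrated in degree zero up to homotopy, it suffices to check vanishing of this specific linear map whenever $\cat{C}(P,Q)=0$.

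Second, I would exploit the topology of the surface underlying~\eqref{multviaPhieqn}. Its outgoing boundary has free-boundary label $P$ on both sides, while one incoming open boundary has free-boundary label $Q$ on both sides; consequently, in any fat-graph model for this surface in the sense of \cite{egas,wahlwesterland}, there is at least one edge whose two adjacent faces carry the D-brane labels $P$ and $Q$. In Costello's evaluation of a Calabi-Yau open TCFT \cite{costellotcft}, every such edge contributes a copairing element in $\cat{C}(P,Q)\otimes\cat{C}(Q,\naka P)$ corresponding to the identity of $\cat{C}(P,Q)$ under the Calabi-Yau pairing of Lemma~\ref{lemmatrace}(ii). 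More intrinsically, I would cut the surface along an arc separating a $P$-labeled and a $Q$-labeled free-boundary region; this realizes the surface in $\OC$ as a composition through an intermediate object whose $\Phi_\cat{C}$-image contains $\cat{C}(P,Q)$ as a tensor factor. Either way, the map~\eqref{multviaPhieqn} factors through a vector space having $\cat{C}(P,Q)$ as a tensor factor, and therefore vanishes whenever $\cat{C}(P,Q)=0$.

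For the parenthetical equivalence $\cat{C}(P,Q)=0\iff\cat{C}(Q,P)=0$, I would combine Lemma~\ref{lemmatrace}(ii) with the symmetric Frobenius structure: the pairing $\cat{C}(P,Q)\otimes\cat{C}(Q,\naka P)\to k$ is non-degenerate, and by Definition~\ref{defsymfrob} together with Remark~\ref{remunpacksymFrob} a symmetric Frobenius structure provides a canonical isomorphism $\naka P\cong P$; hence $\cat{C}(P,Q)$ and $\cat{C}(Q,P)$ are in canonical linear duality, and one vanishes iff the other does.

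The main obstacle is making the ``factors through $\cat{C}(P,Q)$'' claim rigorous without reproducing Costello's construction by hand. The cleanest route is the cut-and-paste one: exhibit in $\OC$ an explicit factorization of the surface in~\eqref{multviaPhieqn} through an intermediate object whose morphism complex to or from it contains $\cat{C}(P,Q)$ as a degree-zero direct summand. Once that factorization is written down, functoriality of $\Phi_\cat{C}$ gives the vanishing immediately.
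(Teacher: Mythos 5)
Your proposal is correct and follows essentially the same route as the paper: the paper's proof likewise reduces to the degree-zero formula of Lemma~\ref{lemmadeligneinH0} and observes that the resulting map on $\cat{C}(P,P)\otimes\cat{C}(Q,Q)$ factors through $\cat{C}(P,Q)$ (or $\cat{C}(Q,P)$), hence vanishes when that hom space is zero. Your additional justification of the factorization via Costello's construction, and of the equivalence $\cat{C}(P,Q)=0\iff\cat{C}(Q,P)=0$ via the non-degenerate pairing and the trivialization of $\naka$, only fills in details the paper leaves implicit.
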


\begin{proof}
	From the formula for the $\star$-product given in 
	Lemma~\ref{lemmadeligneinH0} one can see  that the map describing $\star$ on $\cat{C}(P,P)\otimes\cat{C}(Q,Q)$
	 factors through $\cat{C}(P,Q)$ or $\cat{C}(Q,P)$.
\end{proof}

We now prove a formula for the $\star$-product of identity endomorphisms of two projective objects.
As a preparation, we make the following Definition:

\begin{definition}\label{handleelements}
	Let $\cat{C}$ be a finite tensor category with symmetric Frobenius structure.
	For $P,Q \in \cat{C}$,
	we define the \emph{handle element of $P$ and $Q$} as the endomorphism
	 $\xi_{P,Q} \in \cat{C}(P,P)$ obtained by evaluation of the trace field theory on the annulus:
	\begin{equation}
	\xi_{P,Q} :=    \Phi_\cat{C}\left(    \tikzfig{handleelement}    \right)\in\cat{C}(P,P) \ . 
	\end{equation}
	\end{definition}

\begin{remark} The name of the element $\xi_{P,Q}$ 
	is chosen
	for the following reason: 
		If we were in the situation $P=Q$, the element $\xi_{P,P}$ would be the composition `$\text{multiplication}\circ \text{comultiplication} \circ \text{unit}$' in the symmetric Frobenius algebra $\cat{C}(P,P)$. 
	In \cite[page~128]{kock}, this element is called the \emph{handle element} of the symmetric Frobenius algebra.  
	\end{remark}

\begin{theorem}\label{thmhandleelement}
Let $\cat{C}$ be a finite tensor category with symmetric Frobenius structure.

\begin{pnum}
	\item For $P,Q\in\Proj\cat{C}$,
	the $\star$-product of $\id_P$ and $\id_Q$ is the  handle element $\xi_{P,Q}$ of $P$ and $Q$, up to boundary in the Hochschild complex of $\cat{C}$;
	\begin{align} \id_P \star \id_Q \simeq \xi_{P,Q} \ . 
	\end{align}\label{tracefieldi}
	\item All handle elements in the sense of Definition~\ref{handleelements} are central in the endomorphism algebras of $\cat{C}$.\label{tracefieldii}
	\item The modified trace of the handle element is given by
	\begin{align} \trace_P \xi_{P,Q}=\dim \,\cat{C}(P,Q) \ . \label{eqntraceformula}
	\end{align}\label{tracefieldiii}
	\end{pnum}	
\end{theorem}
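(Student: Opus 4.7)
My plan is to prove the three parts by manipulating the bordisms that appear in the definitions and invoking functoriality of $\Phi_\cat{C}:\OC\to\Ch$.

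For part (i), I would start from Lemma~\ref{lemmadeligneinH0}, which already identifies $\id_P \star \id_Q$, up to Hochschild boundary, with $\Phi_\cat{C}$ evaluated on the multchain surface of~\eqref{multviaPhieqn} fed the identities on its two incoming open boundary intervals. The task is then to verify that this specialized bordism is, up to cobordism equivalence in $\OC$, the handleelement annulus of Definition~\ref{handleelements}. Inserting the identity morphism on an incoming open boundary interval has the effect, modulo boundary in the $\OC$-morphism complex, of collapsing that interval into the surrounding free boundary, since the identity is realized by a contractible strip whose gluing fuses the two adjacent pieces of free boundary. Performing this collapse on both incoming intervals of multchain yields the handleelement annulus. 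Applying $\Phi_\cat{C}$ then gives $\id_P \star \id_Q \simeq \xi_{P,Q}$.

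For part (ii), centrality of $\xi_{P,Q}$ in $\cat{C}(P,P)$ would follow from the $S^1$-symmetry of the outgoing open boundary interval on the handleelement annulus. Concretely, for any $f \in \cat{C}(P,P)$, the two compositions $f \circ \xi_{P,Q}$ and $\xi_{P,Q} \circ f$ are, by Remark~\ref{remtraces}, the evaluations of $\Phi_\cat{C}$ on the two bordisms obtained by inserting $f$ at two different positions along that open boundary interval. Because the $P$-labeled boundary circle carrying this interval admits a rotation isotopy in $\OC$, the two insertion positions are related by an isotopy of bordisms, and functoriality of $\Phi_\cat{C}$ forces $f \circ \xi_{P,Q} = \xi_{P,Q} \circ f$.

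For part (iii), I would compute $\trace_P \xi_{P,Q}$ topologically. By Remark~\ref{remtraces}, $\trace_P$ is the evaluation of $\Phi_\cat{C}$ on the trace disk; composing this disk with the handleelement annulus in $\OC$ produces a genus-zero bordism with no open boundary intervals and two free boundary circles labeled $P$ and $Q$. To evaluate $\Phi_\cat{C}$ on this closed surface, I would cut it along an arc joining the two free boundary circles. The resulting rectangle carries one incoming and one outgoing open boundary interval of type $P \to Q$ and free labels $P$ and $Q$, and its $\Phi_\cat{C}$-evaluation is the identity on $\cat{C}(P,Q)$. Re-gluing the two open boundary intervals closes the rectangle back into the original cylinder, and this closure corresponds, via the Calabi-Yau structure and the non-degenerate pairing of Lemma~\ref{lemmatrace}, to taking the trace of the identity on the finite-dimensional vector space $\cat{C}(P,Q)$, which is $\dim\,\cat{C}(P,Q)$. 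The main obstacle I expect is this final step: making precise that the closure produces exactly the trace of the identity (and not some modified expression) requires carefully tracking the free boundary labels and orientations of the open intervals through the cut-and-glue, together with the compatibility between the trivialization of $\naka$ and the pivotal structure encoded by the symmetric Frobenius structure, which is what allows the Calabi-Yau closure to reduce to the standard linear trace.
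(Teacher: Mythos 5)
Your proposal is correct and follows essentially the same route as the paper: part (i) by gluing disks onto the incoming intervals of the Lemma~\ref{lemmadeligneinH0} bordism to produce the annulus, part (ii) by deforming the two insertion bordisms into one another, and part (iii) by evaluating $\Phi_\cat{C}$ on the closed-up annulus and identifying the disk evaluation with the modified trace via Theorem~\ref{thmtracefieldtheory}. The only difference is that for (iii) you spell out the cut-and-reglue computation of $\dim\,\cat{C}(P,Q)$ explicitly, where the paper simply cites the corresponding handle-element fact for symmetric Frobenius algebras from \cite{kock}.
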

Of course, the numbers $\dim\,\cat{C}(P,Q)$ on the right hand side 
of \eqref{eqntraceformula} are 
the entries of
the Cartan matrix of $\cat{C}$, considered here as  elements in $k$.

	If $P$ is simple, the handle element is the number
\begin{align}
\xi_{P,Q} = \frac{\dim\, \cat{C}(P,Q)}{d^\text{m} (P)} \in k \ , 
\end{align}
where $d^\text{m} (P):=\trace_P(\id_P)\in k^\times$ is the modified dimension of $P$
(note that $\trace(\id_P)\neq 0$ is a consequence of the non-degeneracy of the trace).

\begin{proof}[{\slshape Proof of Theorem~\ref{thmhandleelement}}]
		In order to compute $\id_P \star \id_Q$ for $P,Q\in\Proj\cat{C}$, we use
	 Lemma~\ref{lemmadeligneinH0} and the functoriality of $\Phi_\cat{C}$: \begin{equation}
		\id_P\star \id_Q \simeq 	\Phi_\cat{C}\left(\tikzfig{multchain}\right) \circ \Phi_\cat{C}\left(\tikzfig{twodisks}\right)= \Phi_\cat{C}\left(    \tikzfig{handleelement}    \right) =\xi_{P,Q} 
	\end{equation}
This proves~\ref{tracefieldi}.

	For the proof of \ref{tracefieldii}, recall that in 
 the setting of symmetric Frobenius algebras, it is shown in \cite[page~128]{kock} that the handle element is central. 
	A straightforward computation
	by means of the trace field theory $\Phi_\cat{C}$ shows that this holds still true in our more general situation: In fact, one can directly see that both the map $\xi_{P,Q}\circ - : \cat{C}(P,P)\to\cat{C}(P,P)$
	and the map $-\circ \xi_{P,Q}: \cat{C}(P,P)\to\cat{C}(P,P)$ are  given by
	\begin{align}
\Phi_\cat{C}\left(\tikzfig{central2}\right) 
		\end{align}
	in terms of the trace field theory.

	For the proof of~\ref{tracefieldiii}, first observe
	\begin{equation}
	\Phi_\cat{C} \left( \tikzfig{diskohne}  \right) \circ  \Phi_\cat{C}\left(    \tikzfig{handleelement}    \right) =      \Phi_\cat{C}\left(    \tikzfig{dimension}    \right)=\dim\,\cat{C}(P,Q) \ . 
	\end{equation}
	(This is the generalization of the fact that the counit evaluated on the handle element on a symmetric Frobenius algebra
	is the linear dimension \cite[page~129]{kock}  to Calabi-Yau categories.) 
	Now we use Theorem~\ref{thmtracefieldtheory} which 
	asserts that the evaluation of $\Phi_\cat{C}$ on the disk with one incoming open boundary interval is actually the modified trace. This proves \eqref{eqntraceformula}. 
\end{proof}

Let us also formulate this on the level of homology and denote for an endomorphism $f: P\to P$ of $P\in\Proj\cat{C}$
  by $\HS(f)\in HH_0(\cat{C})$ the Hattori-Stallings trace. Then Theorem~\ref{thmtracefieldtheory} and Theorem~\ref{thmhandleelement} imply:

	\begin{corollary}\label{corhs}
		For any finite tensor category $\cat{C}$ with symmetric Frobenius structure,
		\begin{align}
			\trace (\HS (\id_P) \star \HS (\id_Q) ) = \dim\, \cat{C}(P,Q) \quad \text{for}\quad P,Q \in \Proj \cat{C} \ . 
			\end{align}
		\end{corollary}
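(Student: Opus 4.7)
The plan is to chain together the three parts of Theorem~\ref{thmhandleelement} with the identification of the cylinder evaluation as the Hattori-Stallings trace from Theorem~\ref{thmtracefieldtheory}. First I would unpack $\HS(\id_P)$: by Theorem~\ref{thmtracefieldtheory}, it is the image of $\id_P$ under the inclusion $\cat{C}(P,P)\hookrightarrow \bigoplus_{X\in\Proj\cat{C}}\cat{C}(X,X)$ followed by the projection to $HH_0(\cat{C})$, so $\HS(\id_P) = [\id_P]$ and likewise $\HS(\id_Q) = [\id_Q]$.

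Next I would use that the $\star$-product descends to $HH_0(\cat{C})$: the Wahl--Westerland argument recalled after Definition~\ref{defstarprod} shows that, up to homotopy, $\star$ is concentrated in degree zero, and by Lemma~\ref{lemmadeligneinH0} its restriction to the summand $\cat{C}(P,P)\otimes \cat{C}(Q,Q)$ is computed by the explicit map~\eqref{multviaPhieqn} and in particular lands in the summand $\cat{C}(P,P)$. Therefore
\begin{align}
\HS(\id_P)\star\HS(\id_Q) = [\id_P\star \id_Q] \in HH_0(\cat{C}),
\end{align}
and Theorem~\ref{thmhandleelement}~(i) rewrites this class as $[\xi_{P,Q}]$.

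Finally, cyclicity of the twisted trace (Lemma~\ref{lemmatrace}) ensures that the modified trace, after being untwisted by means of the symmetric Frobenius structure, descends to the map $\trace\colon HH_0(\cat{C})\to k$ used in the statement, whose restriction to the $P$-summand is just $\trace_P$. Applying it to $[\xi_{P,Q}]$ therefore gives $\trace_P(\xi_{P,Q})$, which equals $\dim\,\cat{C}(P,Q)$ by Theorem~\ref{thmhandleelement}~(iii); this chain of identifications is the corollary.

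I do not expect a serious obstacle here, since the statement is essentially a formal corollary. The only points needing some care are the passages to $HH_0(\cat{C})$, namely that $\star$ descends (Wahl--Westerland plus Lemma~\ref{lemmadeligneinH0}), that the modified trace descends (cyclicity), and that the summand embedding $\cat{C}(P,P)\hookrightarrow HH_0(\cat{C})$ matches the conventions used in the definitions of $\HS$ and of $\trace$ on $HH_0$; once these compatibilities are spelled out, the proof is a three-line concatenation of previous results.
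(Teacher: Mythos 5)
Your proposal is correct and follows essentially the same route as the paper, which simply derives the corollary by combining Theorem~\ref{thmtracefieldtheory} (identifying $\HS(\id_P)$ with the class of $\id_P$ in $HH_0(\cat{C})$) with parts (i) and (iii) of Theorem~\ref{thmhandleelement}. The compatibilities you flag --- that $\star$ and the modified trace descend to $HH_0(\cat{C})$ --- are left implicit in the paper (the former because $\Phi_\cat{C}$ is a functor into chain complexes, so the pair-of-pants evaluation is a chain map; the latter by cyclicity, exactly as you say), so spelling them out is a reasonable elaboration rather than a deviation.
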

	Here, by slight abuse of notation,
	 we denote the map on $HH_0(\cat{C})$
	induced by the modified trace 
	again by $\trace$.

\spaceplease


\begin{thebibliography}{10000000000000}\itemsep0pt
	
	\bibitem[BBG18]{bbg18}
	A. Beliakova, C. Blanchet, A. Gainutdinov. Modified Trace is a Symmetrised Integral.
	arXiv:1801.00321 [math.QA].
	
	


	
\bibitem[BCGPM16]{bcgpm}
C. Blanchet, F. Costantino, N. Geer, B. Patureau-Mirand. Non-Semisimple TQFTs,
	Reidemeister Torsion and Kashaev's Invariants. \emph{Adv. Math.} 301:1--78, 2016.
	
	\bibitem[CGPM14]{cgp}
	F. Costantino, N. Geer, B. Patureau-Mirand. Quantum Invariants of 3-Manifolds
	via Link Surgery Presentations and Non-Semi-Simple Categories. \emph{J. Topol.} 7:1005--1053, 2014.
	
	
	\bibitem[Cos07]{costellotcft}
	K. Costello.
	Topological conformal field theories and Calabi-Yau categories.
	\emph{Adv. Math.}
	210:165-214,
	2007.
	

	
	\bibitem[ES15]{egas}
	D. Egas Santander.
	Comparing fat graph models of moduli space.
	arXiv:1508.03433 [math.AT]
	
	
		\bibitem[EGNO17]{egno}
	P. Etingof, S. Gelaki, D. Nikshych, V. Ostrik.
	\emph{Tensor categories.}
	Math. Surveys  Monogr.  205,
	Am. Math. Soc.,  Providence, 2017.
	
	\bibitem[ENO04]{eno-d}
	 P. Etingof, D. Nikshych,  V. Ostrik. An analogue of Radford's $S^4$
	formula for finite tensor
	categories. \emph{Int. Math. Res. Not.} 54:2915--2933, 2004.
	
		\bibitem[EO04]{etingofostrik}
	P. Etingof, V. Ostrik. {Finite tensor categories.} 
	\emph{Mosc. Math. J.}
	4(3):627--654, 2004.
	
			\bibitem[FSS20]{fss}
	J. Fuchs, G. Schaumann, C. Schweigert.
	{Eilenberg-Watts calculus for finite categories and a bimodule Radford $S^4$ theorem}.
	\emph{Trans. Am. Math. Soc.} 373:1-40, 2020.

	
	\bibitem[GKP11]{mtrace1} N. Geer, J. Kujawa, B. Patureau-Mirand. Generalized trace
	and modified dimension functions on ribbon categories. \emph{Selecta Math. (N.S.)}
	17(2):453--504, 2011. 
	
	
	\bibitem[GKP13]{mtrace2}  N. Geer, J. Kujawa, B. Patureau-Mirand. Ambidextrous objects and trace functions for nonsemisimple categories. \emph{Proc. Amer. Math. Soc.}
	141(9):2963--2978, 2013.
	
\bibitem[GPV13]{mtrace3} N. Geer, B. Patureau-Mirand, A. Virelizier. Traces on ideals in
	pivotal categories. \emph{Quantum Topol.} 4(1):91--124, 2013. 
	
	\bibitem[GKP21]{mtrace}
N. Geer, J. Kujawa, B. Patureau-Mirand. 
M-traces in (non-unimodular) pivotal categories.
\emph{Algebr. Represent. Theor.} 2021 (online first).
	
	
	
\bibitem[GPT09]{geerpmturaev}	N. Geer, B. Patureau-Mirand, V. Turaev. Modified Quantum Dimensions and ReNormalized Link Invariants. \emph{Compos. Math.} 145:196--212, 2009.
	
	\bibitem[Get94]{getzler}
	E. Getzler. Batalin-Vilkovisky algebras and two-dimensional topological field theories.
	\emph{Comm. Math. Phys.} 159(2), 265--285, 1994.
	
	\bibitem[Hat65]{hattori}
	A. Hattori. Rank element of a projective module. \emph{Nagoya Math. J.} 25:113--120, 1965.
	
	
		\bibitem[Kas95]{kassel}
	C. Kassel.
	\emph{Quantum Groups}.
	Springer-Verlag New York, 1995.


		\bibitem[Kel99]{keller}
B. Keller.
{On the cyclic homology of exact categories}.
\emph{J. Pure Appl. Algebra} 136:1--56, 1999.
	
	\bibitem[Koc03]{kock}
	J. Kock.
	Frobenius Algebras and 2D Topological Quantum Field Theories.
	London Math. Soc. Student Texts 59, 2003.
	

	
	\bibitem[LP08]{laudapfeiffer}
	A. D. Lauda,
	H. Pfeiffer.
		Open-closed strings: Two-dimensional extended TQFTs and Frobenius algebras.
	\emph{Top. App.} 155(7):623--666, 2008.
	
	
	

	
	

	

	
	
	\bibitem[MCar94]{mcarthy}
	R. McCarthy. {The cyclic homology of an exact category.} \emph{J. Pure Appl. Algebra} 93:251--296, 1994.
	
	
	
	
	
	\bibitem[SW19]{dva}C. Schweigert, L. Woike.
	{The Hochschild Complex of a Finite Tensor Category.} To appear in \emph{Alg. Geom. Top.}
	arXiv:1910.00559 [math.QA]
	


	

	\bibitem[Seg04]{segal}
G. B. Segal.
{The definition of conformal field theory}.  In: \emph{Topology, Geometry and Quantum Field Theory}, U. Tillmann (ed.), Cambridge University Press, Cambridge 2004, 432--577.


\bibitem[SS21]{ss21}
T. Shibata, K. Shimizu.
Modified traces and the Nakayama functor.
arXiv:2103.13702 [math.QA]





\bibitem[Shi20]{shimizucoend}
K. Shimizu.
Further Results on the Structure of (Co)Ends in Finite Tensor Categories.
\emph{App. Cat. Str.} 28:237--286, 2020.

\bibitem[Sta65]{stallings}
J. Stallings. Centerless groups --- an algebraic formulation of Gottlieb's theorem. \emph{Topology} 4:129--134, 1965.


	\bibitem[WW16]{wahlwesterland}
		N. Wahl, C. Westerland.
	Hochschild homology of structured algebras.
\emph{Adv. Math.} 288:240--307, 2016.
	
	
	

	
	
	
\end{thebibliography}
\end{document}